\newcommand*{\doi}[1]{doi: \href{https://dx.doi.org/#1}{\urlstyle{rm}\nolinkurl{#1}}}
\newcommand*{\arxiv}[1]{arXiv:  \href{https://arxiv.org/abs/#1}{\urlstyle{rm}\nolinkurl{#1}}}
\titleformat{\subsubsection}[runin]
	{\normalfont\normalsize\bfseries\filcenter}{\thesubsection.}{1 ex}{}
\declaretheorem[within=section]{theorem}
\declaretheorem[sibling=theorem]{lemma}
\declaretheorem[sibling=theorem]{corollary}
\declaretheorem[style=remark,sibling=theorem,qed={$\diamondsuit$}]{remark}
\declaretheorem[style=definition,sibling=theorem]{definition}
\declaretheorem[style=remark,sibling=theorem]{example}
\renewcommand{\vec}[1]{\mathbf{#1}}
\newcommand\cE{\mathcal{E}}
\newcommand\EE{\mathbb{E}}
\newcommand\NN{\mathbb{N}}
\newcommand\QQ{\mathbb{Q}}
\newcommand\RR{\mathbb{R}}
\newcommand{\Euc}{{\rm Euc}}
\DeclareMathOperator{\rank}{rank}
\newcommand{\defn}[1]{\emph{\color{blue} #1}} 
\newcommand{\ones}{\mathbb{1}}
\newcommand{\eps}{\varepsilon}
\begin{document}
\title{Frameworks with coordinated edge motions}
\author{Bernd Schulze \thanks{
Department of Mathematics and Statistics, Lancaster University, 
Lancaster, UK. \url{b.schulze@lancaster.ac.uk}}
\and
Hattie Serocold \thanks{
\url{hattie.serocold@gmail.com}
}
\and Louis Theran
\thanks{School of Mathematics and Statistics, University of St Andrews, St Andrews, Scotland. \url{lst6@st-and.ac.uk}}}
\date{}

\maketitle

\begin{abstract}
We develop a rigidity theory for bar-joint 
frameworks in Euclidean $d$-space in which specified classes of edges 
are allowed to change length in a coordinated fashion that requires
differences of lengths to be preserved within each class.  Rigidity 
for these coordinated frameworks is a generic property, and 
we characterize the rigid graphs in terms of redundant
rigidity in the standard $d$-dimensional rigidity matroid.  We
also interpret our main results in terms of matroid unions.
\end{abstract}


\section{Introduction}\label{sec:introduction}
A (bar-joint) framework $(G,p)$ is a graph $G=(V,E)$ and a 
map $p:V\to \EE^d$. By identifying $p$ with a vector in $\EE^{d|V|}$ (using any order on $V$), we may also refer to $p$ as a configuration of $|V|$ points in $\EE^d$.
Intuitively, we may think of a  framework as a collection of
fixed-length bars (corresponding to the edges of $G$) which are connected at their ends
by joints (corresponding to the vertices of $G$) that allow bending in any direction of $\EE^d$.

One fundamental question in rigidity theory is whether all edge-length preserving, continuous  
motions of a given framework are rigid  body motions.  In this case, a framework is called
\defn{rigid} and otherwise \defn{flexible}.  (See Section 
\ref{sec:background} for formal definitions.)

\paragraph{Generic rigidity}
In general, whether a framework $(G,p)$ is rigid or flexible depends on both $G$ and $p$; that is, rigidity 
is a geometric property.  However, there is a typical behavior as $p$ varies and $G$ is fixed.
\begin{definition}\label{def: generic}
A point configuration $p$ of $n$ points in dimension $d$ 
is \defn{generic} if the coordinates of the points $p(i)$
do not satisify any polynomial equation with coefficients in $\QQ$.
Generic points are dense in the space of $n$-point configurations.

If $G$ is a graph with $n$ vertices, a framework $(G,p)$ is called 
generic if $p$ is generic.
\end{definition}
Asimow and Roth proved the following fundamental result \cite{AR78}.
\begin{theorem}\label{thm: ar}
For every dimension $d$ and graph $G$, either 
every generic framework $(G,p)$ in $\EE^d$ is rigid or every generic framework $(G,p)$
in $\EE^d$ is flexible.
\end{theorem}
A consequence is that we may investigate the generic rigidity 
properties of a graph $G$.
\begin{definition}\label{def: rigid graphs}
Let $d\in \NN$ be a dimension and $G$ a graph. Then $G$ is 
\defn{generically rigid} in dimension $d$ if every generic framework $(G,p)$
is rigid; otherwise every generic $(G,p)$ is flexible and $G$ is \defn{generically flexible}
in dimension $d$.

If $G$ is generically rigid in dimension $d$, but no proper spanning subgraph of $G$ 
is generically rigid, then $G$ is \defn{isostatic} in dimension $d$.
\end{definition}

\paragraph{Rigidity matroids}
It is implicit in \cite{AR78} and first explicitly observed and used by 
Lovász and Yemini in \cite{LY82} that generic rigidity has a matroidal structure.
\begin{definition}\label{def: rigidity matroid}  
Fix a dimension $d$ and let $n\ge d$.  Let $E^n$ be the edges of the complete graph $K_n$.
The matroid  on the ground set $E^n$ of rank $dn - \binom{d+1}{2}$
that has as its bases the edge sets of the isostatic graphs with $n$ vertices in dimension $d$ is called the $d$-dimensional \defn{rigidity matroid} of $K_n$ and is denoted by $M_{d,n}$.

The restriction of $M_{d,n}$ to the edges of an $n$ vertex graph $G$ is 
the \defn{rigidity matroid of $G$}, $M_d(G)$.
\end{definition}
Lovász and Yemini define $M_{d,n}$ in terms of a linearization of rigidity called
infinitesimal rigidity that we discuss in more detail in Section \ref{sec:background}.
This approach is now standard in the field; see, e.g., Whiteley's survey \cite{Wh96} 
for an overview on the interplay between matroid theory and rigidity problems.  

It is easy to check, using a randomized algorithm based on Gaussian elimination, 
whether a specific graph $G$ is generically rigid in dimension $d$ for any $d$ and number of 
vertices $n$ (a detailed analysis is in \cite{GHT}, but this is a folklore fact).
On the other hand, except for dimensions $d=1$, which is folklore, and $d=2$, 
which is due to Pollaczek-Geiringer \cite{hilda} (and later rediscovered by 
Laman \cite{L70}), a combinatorial characterization of the matroids $M_{d,n}$
is a notable open problem \cite[Sec. 61.1.2, ``Open problems'']{HoDCG}.

\paragraph{Coordinated rigidity: motivation and results}

In recent work, Nixon, Schulze, Tanigawa and Whiteley \cite{NSTW15} defined
a generalization of frameworks that enlarges the class of allowed motions.  The vertices of $(G,p)$ are partitioned into $k+1$ different classes, 
$V_0, V_1, \ldots, V_k$.  The set of allowed configurations $p$ is constrained
so that for $j\ge 1$ all vertices $i\in V_j$ have the same distance to the origin (but this distance may change), 
 and all the vertices $i$ in $V_0$ lie on the unit sphere.  The
allowed motions are the continuous deformations in the space of allowed
configurations. The motivation for studying these types of frameworks is to interpolate between rigidity in dimension $d$ and dimension $d+1$.

A model for this expanding spheres setup, which is present in \cite{NSTW15}, is based on Whiteley’s
coning construction \cite{coning}. We first add a new vertex $v_0$ to $G$ and fix it at the origin (the centre of the spheres) and then we connect $v_0$ to each of the 
vertices of $G$. The new edges joining $v_0$ with the vertices in $V_0$ must have fixed unit length, and the remaining new edges joining $v_0$ with vertices in $V_1\cup \ldots \cup V_k$ do not have fixed length, but all of the ones in the same class $V_j$ must have the same length.

Inspired by \cite{NSTW15} we consider frameworks in which  not all of the bars are fixed-length in a more general 
fashion.  We identify, in advance $k\in \NN$ 
``coordination classes'' of edges which are allowed to change 
their length, subject to edge length differences being 
preserved within each coordination class.

Our study of coordinated rigidity
in such a general setup is also motivated by some recent results in condensed matter theory. In 
\cite{EHKTvH15,RPW+18,HLN18} it is shown that (nearly) minimally 
rigid frameworks can be 
``tuned'' to have a number of interesting 
geometric and material properties.  The results we
present here could potentially form the combinatorial 
part of a design methodology for 
such ``meta-materials''. Other potential practical applications arise from the analysis of frameworks modelling engineering structures that are driven by a collection of pistons which are all connected to a central pump, and so will extend or contract based on the pressure across the whole system, or structures whose members are made of multiple different types of materials, which may expand at different rates when the structure is heated.

In this paper we develop the 
continuous and infinitesimal rigidity theory for coordinated
frameworks and show that rigidity of coordinated frameworks
is a generic property.  
Our main result, Theorem \ref{thm:all-k}, 
shows that generic rigidity of a coordinated framework has a 
characterization in terms of the generic rigidity properties of 
a bar-joint framework with the same underlying graph. 
As a corollary, we identify the $d$-dimensional coordinated 
rigidity matroid for a fixed partition of the edges of the graph into coordination 
classes with a matroid union of the standard rigidity matroid  and an 
associated partition matroid. In particular, this provides a deterministic polynomial time algorithm for checking generic coordinated rigidity in dimension $2$ for any $k$.

\paragraph{Organization}

The structure of the paper is as follows. We start by briefly introducing the necessary definitions and results from standard (finite and infinitesimal) rigidity theory in Section \ref{sec:background}. These definitions and results are then adapted in Section \ref{sec:coordinated-frameworks} to the coordinated context described above.  Characterizations for generic coordinated rigidity in arbitrary dimension are then given in Section \ref{sec:bridges}. Finally, in Section~\ref{sec:closing} we discuss algorithms for checking generic coordinated rigidity and outline some further directions of research.

\section{Rigidity background}\label{sec:background}
We start by introducing the definitions, notation, and basic results from rigidity theory that are
required for the development of a rigidity theory for coordinated frameworks. (See \cite{HoDCG}, for example, for further details.)

\subsection{Graphs} 
We denote graphs by $G=(V,E)$, where $V$ is 
the set of vertices and $E$ is the set of edges. 
In cases where $G$ is not clear from the context, we  write $V(G)$ for $V$ and $E(G)$ for $E$.  We usually
use $n$ and $m$ to denote the number of vertices and edges,
respectively, and write edges as unordered pairs $\{i,j\}$ of 
vertices.  We also use the notation $e$ for an edge when 
the endpoints aren't important.

\subsection{Point configurations} 
Fix a dimension $d$.
A \defn{$d$-dimensional configuration} $p$ is an ordered
tuple of $n$ points $(p(1),  \ldots, p(n))$
in $\EE^d$.  Let $\Euc(d)$ be the group of \defn{rigid motions} of 
$\EE^d$.  We define configurations $p$ and $q$ to be 
\defn{congruent} if there is a $T\in \Euc(d)$ such that 
$q(i) = T(p(i))$ for all $1\le i\le n$.

Fixing an affine structure 
and an origin, we can identify points in $\EE^d$ with 
their coordinates in $\RR^d$,
so we may regard $p$ 
as a mapping $[n]\to \RR^d$ or a vector in 
$\left(\RR^d\right)^n\cong \RR^{dn}$.

The tangent space to $\EE^d$ is $\RR^d$ at every point, 
so we have an identification between \defn{velocity fields} $p'$
supported on $p$ and $\left(\RR^d\right)^n\cong \RR^{dn}$.

\subsection{Frameworks, rigidity, and flexibility}\label{sec:fwrig}
We now define bar-joint frameworks formally.
\begin{definition}\label{def: framework}
A \defn{$d$-dimensional (bar-joint) framework} $(G,p)$ is defined by a graph $G$ with $n$ vertices and an $n$-point 
configuration $p$ which assigns a point in $\RR^d$ to each vertex of $G$.
\end{definition}

\paragraph{Continuous rigidity}
Underlying the notion of rigidity are the concepts of equivalence and
congruence.
\begin{definition}\label{def: eqv-cong}
Two $d$-dimensional frameworks $(G,p)$ and $(G,q)$ are \defn{equivalent} if 
\begin{eqnarray}\label{eq:edge-eqv}
	||p(j) - p(i)|| = ||q(j) - q(i)|| & 
    \text{for all $\{i,j\}\in E$}
\end{eqnarray}
Frameworks $(G,p)$ and $(G,q)$ are \defn{congruent}
if $p$ and $q$ are congruent.
\end{definition}
Now we can define rigidity and flexibility.
\begin{definition}\label{def: rigidity/flexibility}
A framework $(G,p)$ 
is (locally) \defn{rigid} if there is a neighborhood
$U \subset \RR^{dn}$ of $p$ with the property that
if $q\in U$ and $(G,p)$ and $(G,q)$ are equivalent, 
then they are congruent.

A \defn{finite motion} 
of a framework $(G,p)$ is a
one-parameter family of frameworks $(G,p_t)$ 
with $p_0 = p$ and $(G,p_t)$ equivalent to 
$(G,p)$ for all $t\in [0,1)$.  A finite motion is 
\defn{non-trivial} if not all the $(G,p_t)$ are congruent to
$(G,p)$.  A framework is \defn{flexible} if it has
a non-trivial finite motion.
\end{definition}
The definitions of rigid and flexible are, a priori, 
not directly related.  It is straightforward that a 
rigid framework is not flexible.  Asimow and Roth \cite{AR78} proved
the strong converse.
\begin{theorem}\label{thm: not-rigid-flex}
Let $(G,p)$ be a $d$-dimensional framework.  Then if $(G,p)$ is not 
rigid, it is flexible.  Hence, for any $d$ and $G$, every $d$-dimensional 
framework $(G,p)$ is either rigid or flexible.
\end{theorem}

\paragraph{Infinitesimal rigidity}
Theorem \ref{thm: not-rigid-flex} holds unconditionally, 
and its proof relies on a difficult algebraic-geometric result 
(Milnor's curve selection lemma \cite{milnor}).  In the 
generic case, it suffices to study the linearization of rigidity.

\begin{definition}\label{def: infmot}
An \defn{infinitesimal motion} $p'\in \RR^{dn}$ of a $d$-dimensional framework $(G,p)$ is a 
velocity field supported on $p$ such that 
\begin{eqnarray}\label{eq:infmot}
	[p(j) - p(i)]\cdot [p'(j) - p'(i)] = 0 & 
    \text{for all $\{i,j\}\in E$}
\end{eqnarray}

An infinitesimal motion is called \defn{trivial} if it 
arises as the derivative of a rigid motion of $\EE^d$, 
restricted to $p$.  The dimension of the space of trivial infinitesimal 
motions of a framework in  $\EE^d$ with at least $d$ vertices 
is $\binom{d+1}{2}$.

A framework $(G,p)$ in dimension $d$ is \defn{infinitesimally rigid}
if every infinitesimal motion of it is trivial.  Otherwise $(G,p)$
is \defn{infinitesimally flexible}.
\end{definition}
Theorem \ref{thm: ar} from the introduction follows from the following more 
specific statement.
\begin{theorem}[name={\cite{AR78}}]\label{thm:rigidity-is-generic}
Fix a dimension $d$ and let $G$ be a graph with $n\ge d$ vertices.
If a $d$-dimensional framework $(G,p)$ is infinitesimally rigid, then it is rigid. 
If $(G,p)$ is generic and infinitesimally flexible, then it is 
flexible.
\end{theorem}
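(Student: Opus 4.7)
The plan is to set up the standard analytic framework. Introduce the squared-length (rigidity) map $f_G : \RR^{dn} \to \RR^m$ defined by $f_G(p)_{ij} = \|p(j)-p(i)\|^2$ for $\{i,j\}\in E$, and observe that the equations (\ref{eq:infmot}) defining infinitesimal motions are, up to a factor of $2$, the linear equations $Df_G(p)\cdot p' = 0$. Thus the rigidity matrix $R(G,p)$ is (a scalar multiple of) the Jacobian of $f_G$ at $p$, equivalence of frameworks corresponds to lying in the same fibre of $f_G$, and congruence corresponds to lying in the same $\Euc(d)$-orbit. Assuming (as we may, for generic $p$) that the points affinely span $\EE^d$, this orbit is a smooth submanifold of $\RR^{dn}$ whose tangent space at $p$ is exactly the $\binom{d+1}{2}$-dimensional space of trivial infinitesimal motions.

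For the first statement, assume $(G,p)$ is infinitesimally rigid, so $\rank R(G,p) = dn - \binom{d+1}{2}$. Since matrix rank is lower semicontinuous in the entries, and trivial motions always lie in $\ker R(G,\cdot)$, the rank of $R(G,q)$ is forced to equal $dn - \binom{d+1}{2}$ on a neighbourhood $U$ of $p$. The constant rank theorem then gives that $f_G^{-1}(f_G(p))\cap U$ is a smooth submanifold of dimension $\binom{d+1}{2}$. It contains the $\Euc(d)$-orbit of $p$, which is a smooth submanifold of the same dimension, so on a possibly smaller neighbourhood the two coincide. Hence every nearby $q$ equivalent to $(G,p)$ is congruent to it, which is exactly local rigidity.

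For the second statement, assume $(G,p)$ is generic and infinitesimally flexible, so $\rank R(G,p) =: r < dn - \binom{d+1}{2}$. Because $R(G,q)$ has polynomial entries in the coordinates of $q$, its rank attains its maximum on a Zariski-open set of configurations, and genericity of $p$ places $p$ in this set. Combined with lower semicontinuity, this gives that $\rank R(G,\cdot) \equiv r$ on a neighbourhood $U$ of $p$. The constant rank theorem once again yields that $f_G^{-1}(f_G(p))\cap U$ is a smooth manifold, but now of dimension $dn - r > \binom{d+1}{2}$, strictly larger than the orbit through $p$. A smooth curve inside this fibre transverse to the orbit at $p$ then provides a non-trivial finite motion, so $(G,p)$ is flexible.

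The main obstacle, and the technical heart of the argument, is establishing local constancy of the rank of $R(G,\cdot)$ at $p$, since this is exactly what is needed to invoke the constant rank theorem; without it, a fibre of $f_G$ could have complicated local structure at $p$ and the implicit function theorem would not apply. In part (i) the upper bound $\rank R(G,q) \leq dn - \binom{d+1}{2}$ supplied by the trivial motions is what closes the gap with semicontinuity, and a small amount of care is needed to rule out degenerate $p$ (e.g.\ those with smaller affine span) where the trivial subspace drops dimension. The rest of the argument is a standard dimension-counting manipulation with smooth manifolds.
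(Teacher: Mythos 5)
Your argument is correct: it is the standard Asimow--Roth proof via the constant rank theorem (local rank constancy from lower semicontinuity plus the trivial-motion upper bound in part (i), and from genericity in part (ii)), with the degenerate-affine-span caveat appropriately flagged. The paper itself gives no proof of this statement --- it is quoted directly from \cite{AR78} --- so your proposal essentially reconstructs the cited reference's argument.
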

To study infinitesimal rigidity, we use the rigidity matrix.
\begin{definition}\label{def: rigidity matrix}
Fix a dimension $d$ and let $G$ be a graph with $n\ge d$
vertices.  The \defn{rigidity matrix} $R(p)$ of a $d$-dimensional framework $(G,p)$
is the $m\times dn$ matrix of the system 
\eqref{eq:infmot}, where $p'$ is unknown.  
The space of infinitesimal  motions $M(p)$ 
is the kernel of $R(p)$, and $(G,p)$ is infinitesimally
rigid if and only if $\rank R(p) = dn - \binom{d+1}{2}$.
\end{definition}
We are now ready to set up some important concepts relating to
infinitesimal rigidity.
\begin{definition}\label{def: inf-ind-stress}
Let $(G,p)$ be a framework in dimension $d$.  A vector $\omega$
in the left kernel of the rigidity matrix $R(p)$ is called an 
\defn{equilibrium stress} of $(G,p)$.

If $(G,p)$ has no non-zero equilibrium stress, then $(G,p)$ is 
\defn{independent}.  Otherwise $(G,p)$ is \defn{dependent}.
A framework is independent if and only if its rigidity matrix 
has linearly independent rows.

An edge $e$ of a graph $G$ is called a 
\defn{redundant edge of $(G,p)$} if there is an equilibrium stress 
$\omega$ of the framework $(G,p)$ with $\omega(e)\neq 0$.  (The edge $e$ is called redundant because removing 
$e$ from $G$ does not change the rank of $R(p)$.)

A framework that is both infinitesimally 
rigid and independent is called \defn{isostatic}.
\end{definition}

\subsection{The rigidity matroid}\label{sec: rigmat}
Throughout this paper we will use the standard matroid terminology 
of \defn{rank}, \defn{bases} and \defn{independent sets} 
(see, e.g., \cite{O11} for an introduction to matroids).  

A standard result that is implicit in \cite{AR78} is
\begin{lemma}\label{lem: rigmatroid}
Fix a dimension $d$ and let $G$ be a graph with $n\ge d$ vertices.  If there is any configuration $p$ of $n$ points so that $(G,p)$ is isostatic,
then for every generic $q$, $(G,q)$ is isostatic.
\end{lemma}
As a corollary we obtain a statement first made explicit in \cite{LY82}.
\begin{lemma}\label{lem: rigmat defd}
The rigidity matroid $M_{d,n}$ from Definition 
\ref{def: rigidity matroid} exists and is isomorphic to the 
linear matroid on the rows of the rigidity matrix $R(p)$ of 
the complete graph $K_n$ for any generic choice of $p$.
\end{lemma}
\begin{definition}\label{def: redundant}
Fix a dimension $d$ and let $G=(V,E)$ be a graph with $n$ vertices. The \defn{rank} of $G$  is the rank of $E$ in the rigidity matroid $M_{d,n}$.
An edge $e$ of $G$ is \defn{redundant} if the rank of $G$
and the rank of the graph $G\setminus \{e\}$ obtained from $G$ by removing the edge $e$
are the same in $M_{d,n}$.

A subset $E' = \{e_1, \ldots, e_k\}$ of edges in $G$ is redundant if
$G\setminus E'$ has the same rank as $G$ in $M_{d,n}$.
\end{definition}

We will need the following lemmas.

\begin{lemma}\label{lem: stress redundant 1}
Fix a dimension $d$ and let $G$ be a graph with $n\ge d$ vertices.
An edge $e$ of $G$ is redundant if and only if for any generic 
$d$-dimensional framework $(G,p)$ there is an equilibrium stress $\omega$ with 
coordinate $\omega(e)\neq 0$.
\end{lemma}
\begin{proof}
Suppose there exists a generic $d$-dimensional framework $(G,p)$ with the property that every equilibrium stress $\omega$ of $(G,p)$
satisfies $\omega(e)= 0$. Then the row corresponding to $e$ in $R(p)$ is outside
the span of the other rows.  Hence the rank of $R(p)$ will drop if we remove 
that row.  By Lemma \ref{lem: rigmat defd}, $e$ is not redundant.

Conversely, if for any generic $p$ there is an equilibrium stress $\omega$ of $(G,p)$ with $\omega(e)\neq 0$, then the row of $R(p)$ corresponding to $e$ is a linear combination 
of some of the other rows.  Hence, it is possible to pick a set of edges 
$B$ of $G$ not containing $e$ so that the set of rows corresponding to $B$ are a 
basis for the row space of $R(p)$.  It follows that $e$ is redundant.
\end{proof}
\begin{lemma}\label{lem: stress redundant 2}
Fix a dimension $d$ and let $G$ be a graph.  Then a 
subset of edges $E' = \{e_1, \ldots, e_k\}$ is redundant
if and only if for any generic $d$-dimensional framework $(G,p)$
there are equilibrium stresses $\omega_1, \ldots, \omega_k$ 
so that $\omega_i(e_i)\neq 0$ for $i\in [k]$ and $\omega_{i}(e_j) = 0$
for $i\neq j\in [k]$.
\end{lemma}
\begin{proof}
Define $G_i$ to be the graph obtained by removing all the edges of $E'$ 
except for $e_i$ from $G$.  Note that the set $E'$ is redundant in $G$ if and only 
if $e_i$ is redundant in $G_i$ for each $i$.  Since an equilibrium 
stress $\omega_i$ of $(G_i,p)$ is also an equilibrium stress of $(G,p)$,
with $\omega_i(e_j) = 0$ for $i\neq j\in [k]$, applying Lemma \ref{lem: stress redundant 1}
to each $(G_i,p)$ completes the proof.
\end{proof}

\section{Coordinated frameworks and rigidity}\label{sec:coordinated-frameworks}
The main objects of study in this paper are frameworks 
in which the edges are partitioned into 
coordination classes.  We augment the allowed finite 
motions so that the edge lengths within each class 
may change, but the pairwise differences are preserved.
Thus, the allowed motions are ``coordinated'' within
each coordination class.  

In this section we define coordinated frameworks.  To do this, we 
need to describe the combinatorial and geometric data describing a coordinated
framework and the allowed finite motions.  We then define 
infinitesimal motions of coordinated frameworks and 
derive an associated rigidity matrix.
The development runs in parallel to Section \ref{sec:background}.

\subsection{Combinatorial data}
Fix a parameter $k\in \NN$, which we call the number of 
\defn{coordination classes}, and let $G=(V,E)$ be a 
graph.  A \defn{coordination map} is a function $c : E\to \{0,1,\ldots, k\}$.
The underlying combinatorial structure of a coordinated framework is a 
pair $(G,c)$.  For convenience, we define the notation 
$E_i := c^{-1}(i)$ for $i\in \{0,1,\ldots, k\}$, where $E_0$ is the set of
\defn{uncoordinated edges}, and $E_i$ for $i\in [k]$ is the $i$-th \defn{coordination class}.
Throughout this paper, we assume that $E_i \neq \emptyset$ for all $i=1,\ldots, k$.

We call a pair $(G,c)$, where $G$ is a 
graph and $c: E\to \{0,1,\ldots, k\}$ is a coordination map, a \defn{$k$-coordinated graph}.

\subsection{Coordinated frameworks and rigidity}
Let $(G,c)$ be a $k$-coordinated graph with $n$ vertices.
A \defn{placement} $(p,r)$ of $(G,c)$ is given by a point 
configuration $p$ of $n$ points in dimension $d$ 
and a vector $r\in \RR^k$.  Two
placements $(p,r)$ and $(q,s)$ are \defn{congruent}
if $p$ and $q$ are congruent.

A \defn{coordinated framework} $(G,c,p,r)$ is given 
by a $k$-coordinated graph $(G,c)$ and a placement $(p,r)$.
Two frameworks
$(G,c,p,r)$ and $(G,c,q,s)$ are \defn{equivalent} if
\begin{eqnarray}
	\label{eq:e0-constraint}
	||p(j) - p(i)|| = ||q(j) - q(i)|| & 
    	\text{for all $\{i,j\}\in E_0$} \\
        \label{eq:ej-constraint}
    ||p(j) - p(i)|| + r(\ell) = ||q(j) - q(i)|| + s(\ell) & 
    \text{for all $\{i,j\}\in E_\ell$, with $\ell\in [k]$}
\end{eqnarray}
and they are \defn{congruent} if they are equivalent and 
the placements are congruent.  
Figure~\ref{fig:equiv} shows two equivalent, but not congruent, realizations of
a 2-coordinated graph $(K_4,c)$.

A coordinated framework $(G,c,p,r)$ is 
\defn{rigid} if there is a neighborhood 
$U\subset \RR^{dn}\times \RR^k\cong \RR^{dn + k}$
of $(p,r)$ with the property that if $(q,s)\in U$ and $(G,c,q,s)$ is equivalent
to $(G,c,p,r)$, then the two frameworks are congruent. A coordinated framework is \defn{generic} if $p$ is generic.

A \defn{finite motion} of a coordinated framework $(G,c,p,r)$
is a one-parameter family $(G,c,p_t,r_t)$ with $(p_0,r_0) = (p,r)$
and all the $(G,c,p_t,r_t)$ are equivalent to $(G,c,p,r)$, for $t\in [0,1)$.  A finite motion 
is non-trivial if not all the 
$(G,c,p_t,r_t)$ are congruent to $(G,c,p,r)$.  A coordinated 
framework is \defn{flexible} if it has a non-trivial 
finite motion.

\begin{remark}\label{rem:finite}
Geometrically, what is maintained over a finite 
motion is the  differences in length between pairs
of edges $\{i,j\}$ and $\{u,v\}$
in the same coordination class $E_\ell$, since 
\[
	||p_t(j) - p_t(i)|| + r_t(\ell) - 
    ||p_t(v) - p_t(u)|| - r_t(\ell) =  
    ||p_t(j) - p_t(i)|| - ||p_t(v) - p_t(u)||
\]
does not depend on $r_t(\ell)$, so it must be constant over the motion.
\end{remark}

\begin{figure}[htp]
\begin{center}
\begin{tikzpicture}[very thick,scale=1.4]
\tikzstyle{every node}=[circle, draw=black, fill=white, inner sep=0pt, minimum width=4pt];
\node (p1) at (0,0) {};
\node (p2) at (1,0) {};
\node (p3) at (1,1) {};
\node (p4) at (0,1) {};
\draw[dashed] (p4) -- (p2) (p3) -- (p4) ;
\draw  (p2) -- (p1) (p1) -- (p3);
\draw (p2) -- (p3) (p4) -- (p1) ;
\node [rectangle,draw=white, fill=white] (b) at (-0.25,-0.25) {p(1)};
\node [rectangle,draw=white, fill=white] (b) at (1.25,-0.25) {p(2)};
\node [rectangle,draw=white, fill=white] (b) at (1.25,1.25) {p(3)};
\node [rectangle,draw=white, fill=white] (b) at (-0.25,1.25) {p(4)};
\end{tikzpicture}
\hspace{1.5cm}
\begin{tikzpicture}[very thick,scale=1.4]
\tikzstyle{every node}=[circle, draw=black, fill=white, inner sep=0pt, minimum width=4pt];
\node (p1) at (0,0) {};
\node (p2) at (1,0) {};
\node (p3) at (1,1) {};
\node (p4) at (0.64767,0.761921) {};
\draw[dashed] (p4) -- (p2) (p3) -- (p4) ;
\draw  (p2) -- (p1) (p1) -- (p3);
\draw (p2) -- (p3) (p4) -- (p1) ;
\node [rectangle,draw=white, fill=white] (b) at (-0.25,-0.25) {q(1)};
\node [rectangle,draw=white, fill=white] (b) at (1.25,-0.25) {q(2)};
\node [rectangle,draw=white, fill=white] (b) at (1.25,1.25) {q(3)};
\node [rectangle,draw=white, fill=white] (b) at (0.25,0.8) {q(4)};
\end{tikzpicture}
\end{center}
\vspace{-0.3cm}
\caption{Two equivalent but non-congruent coordinated frameworks $(K_4,c,p,r)$ and $(K_4,c,q,s)$  in the plane with $k=1$, where edges in $E_1$ are 
denoted by dashed lines. The coordinates of the points are  $p(1)=q(1)=(0,0)$, $p(2)=q(2)=(1,0)$, 
$p(3)=q(3)=(1,1)$, $p(4)=(0,1)$ and $q(4)$ has coordinates 
close to $(0.64767,0.761921)$. Also, $r=0$ and $s$ is an 
algebraic number close to $0.574773$
}
\label{fig:equiv}
\end{figure}

\subsection{Coordinated infinitesimal rigidity} 
Assuming that there are no zero-length 
edges in a coordinated framework $(G,c,p,r)$, 
the Jacobian matrix of the system
\eqref{eq:e0-constraint}--\eqref{eq:ej-constraint}
is the linear system
\begin{eqnarray}
	\label{eq:inf0-constraint0}
	\frac{[p(j) - p(i)]\cdot [p'(j) - p'(i)]}{\|p(j) - p(i)\|} = 0 & 
    	\text{for all $\{i,j\}\in E_0$} \\
        \label{eq:infj-constraint0}
    \frac{[p(j) - p(i)]\cdot [p'(j) - p'(i)]}{\|p(j) - p(i)\|} + 
    r'(\ell) = 0 & 
    \text{for all $\{i,j\}\in E_\ell$, with 
    	$\ell\in [k]$}
\end{eqnarray}
so a $(p',r')$ satisfying
\eqref{eq:inf0-constraint0}--\eqref{eq:infj-constraint0}
preserves the coordinated framework's constraints to first order.
Because it will be easier to work with combinatorially, we instead 
define an \defn{infinitesimal motion} of a coordinated framework
$(G,c,p,r)$ to be a pair $(p',r')$ consisting of a
velocity field $p'$ supported on $p$ and a vector
$r'\in \RR^k$ such that
\begin{eqnarray}
	\label{eq:inf0-constraint}
	[p(j) - p(i)]\cdot [p'(j) - p'(i)] = 0 & 
    	\text{for all $\{i,j\}\in E_0$} \\
        \label{eq:infj-constraint}
    [p(j) - p(i)]\cdot [p'(j) - p'(i)] + r'(\ell) = 0 & 
    \text{for all $\{i,j\}\in E_\ell$, with 
    	$\ell\in [k]$}
\end{eqnarray}
We justify this as follows:
\begin{lemma}\label{lem: inf motions make sense}
Let $(G,c,p,r)$ be a coordinated framework such that 
the endpoints of every edge are distinct.  Then 
there is a non-zero solution to \eqref{eq:inf0-constraint0}--\eqref{eq:infj-constraint0}
if and only if there is a non-zero solution to 
\eqref{eq:inf0-constraint}--\eqref{eq:infj-constraint}.
\end{lemma}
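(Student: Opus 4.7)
Plan: The plan is to exhibit a shared space of non-zero solutions to the two systems, using the hypothesis that every edge has distinct endpoints in two essential ways. This hypothesis ensures $\ell_e := \|p(j)-p(i)\| > 0$ for each edge $e = \{i,j\}$, so the Jacobian system \eqref{eq:inf0-constraint0}--\eqref{eq:infj-constraint0} is well-defined (no division by zero). It also guarantees that $|V| \geq 2$, so the configuration $p$ admits non-zero trivial infinitesimal motions (for instance, non-zero constant translations).

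Next, I would observe that any trivial infinitesimal motion $p'$ of $p$, coming from the standard action of $\mathrm{Euc}(d)$ on point configurations, satisfies the defining identity $[p(j)-p(i)] \cdot [p'(j)-p'(i)] = 0$ for every pair $\{i,j\}$, since rigid motions preserve squared pairwise distances to first order. Consequently, setting $r' = 0$, the pair $(p', 0)$ satisfies both \eqref{eq:inf0-constraint0}--\eqref{eq:infj-constraint0} (the equation becomes $0/\ell_e + 0 = 0$) and \eqref{eq:inf0-constraint}--\eqref{eq:infj-constraint} (the equation becomes $0 + 0 = 0$). Taking any non-zero trivial motion as $p'$ (e.g.\ a non-zero translation) yields a non-zero solution common to both systems, which immediately establishes the biconditional in both directions.

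The one potential subtlety, and the part I would expect to be the heart of the matter, is that direct row-scaling of \eqref{eq:inf0-constraint0}--\eqref{eq:infj-constraint0} by $\ell_e$ does not literally transform it into \eqref{eq:inf0-constraint}--\eqref{eq:infj-constraint}: scaling row $e$ also scales the coefficient of $r'(\ell)$ from $1$ to $\ell_e$, which varies across edges within a coordination class and so cannot be absorbed by a change of variables in the scalar $r'(\ell)$. This apparent asymmetry is the reason a direct matrix equivalence between the two systems fails; however, it is not an obstacle here, because the shared non-zero solutions come from pairs with $r' = 0$, for which the differing $r'(\ell)$-coefficients are immaterial. Thus the role of row-scaling is purely to confirm well-definedness, while the biconditional is carried by the common trivial-motion subspace.
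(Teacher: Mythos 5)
Your argument is logically valid for the statement as literally written, but only because that statement is vacuous: as you observe, any non-zero translation paired with $r'=0$ solves both systems, so ``there is a non-zero solution'' holds unconditionally on both sides and the biconditional carries no information. That is not what the lemma is for. Immediately after the proof the paper asserts that ``the construction in Lemma \ref{lem: inf motions make sense} maps vectors with $r'=0$ to vectors with $r''=0$ and vice versa, so the definition of infinitesimal rigidity by \eqref{eq:inf0-constraint0}--\eqref{eq:infj-constraint0} is the same as the one here.'' In other words, the content actually needed is a correspondence between the solution spaces of the two systems --- enough to conclude that one system has only the trivial solutions $(p',\vec{0})$ exactly when the other does --- and the paper's proof supplies (an attempt at) such a correspondence by clearing denominators in \eqref{eq:infj-constraint0} and replacing $r'(\ell)$ by $r''(\ell)=\|p(j)-p(i)\|\,r'(\ell)$. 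Your argument deliberately sidesteps any correspondence and instead exhibits a single solution common to both systems; it therefore leaves the subsequent claim that the two notions of infinitesimal rigidity agree completely unsupported. I would count this as a genuine gap: you have proved the letter of the lemma by exposing its vacuity, not its intent.

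To your credit, the ``subtlety'' you flag is real, and it is in fact a weak point of the paper's own proof rather than of yours: the quantity $\|p(j)-p(i)\|\,r'(\ell)$ depends on which edge $\{i,j\}\in E_\ell$ is used, so the vector $r''$ as the paper defines it is only well-defined when $r'(\ell)=0$ or all edges in class $\ell$ have equal length. The productive response to that obstruction, however, is not to retreat to the trivial-motion subspace. One should note that the two systems are the kernels of $(D^{-1}R(p),\ones(c))$ and $(R(p),\ones(c))$, where $D$ is the positive diagonal matrix of edge lengths, and that on the slice $r'=0$ the two kernels coincide exactly (row-scaling by $D$ is harmless there); this is the part of the correspondence the paper actually invokes later, and it is exactly the part your proposal gestures at but does not extract as a usable statement about solution spaces.
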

\begin{proof}
Suppose that $(p',r')$ satisfies \eqref{eq:inf0-constraint0}--\eqref{eq:infj-constraint0}.
Clearing the denominator in \eqref{eq:infj-constraint0}, we 
get
\[
    [p(j) - p(i)]\cdot [p'(j) - p'(i)] + \|p(j) - p(i)\|r'(\ell) = 0
\]
for all edges $\{i,j\}\in E_\ell$.  If we define the vector
\[
    r'' = \left( \|p(j) - p(i)\|r'(\ell) \right)_{\ell=1}^k
\]
it then follows that $(p',r'')$ satisfies 
\eqref{eq:inf0-constraint}--\eqref{eq:infj-constraint}.
The other direction is similar.
\end{proof}

Define $\ones(c)$ to be the $m\times k$ matrix 
that has as its columns the characteristic vectors
of the $E_\ell$.  Then \eqref{eq:inf0-constraint}--%
\eqref{eq:infj-constraint} is equivalent to 
\begin{equation}\label{eq:inf-rigidity-matrix-form}
	R(p)p' + \ones(c)r' = 0
\end{equation}
where $R(p)$ is the rigidity matrix  of 
the \defn{underlying framework} $(G,p)$.  Note 
that $r$ does not appear in \eqref{eq:inf-rigidity-matrix-form},
so infinitesimal rigidity of $(G,c,p,r)$ depends only on $p$.  Thus,
for analysing infinitesimal rigidity, we may assume that $r=0$.

Since $r'$ can be the zero vector, \eqref{eq:inf-rigidity-matrix-form} is homogeneous. Thus, the
infinitesimal motions form a vector space that
contains a $\binom{d+1}{2}$-dimensional subspace
of motions $(p',\vec{0})$, with $p'$ a trivial
infinitesimal motion of $(G,p)$. This is the space of trivial infinitesimal motions of $(G,c,p,r)$. We define 
$(G,p,c,r)$ to be \defn{infinitesimally rigid}
if these are the only infinitesimal motions, and 
\defn{infinitesimally flexible} otherwise.
The construction in Lemma 
\ref{lem: inf motions make sense} maps vectors 
with $r' = 0$ to vectors with $r'' = 0$ and vice versa,
so the definition of infinitesimal rigidity by 
\eqref{eq:inf0-constraint0}--\eqref{eq:infj-constraint0}
is the same as the one here.

Examples of an infinitesimally flexible and an infinitesimally rigid coordinated framework with $k=1$ and $d=2$ are shown in Figure~\ref{fig:motionexamp}. Note that if $G$ is generically flexible (or even isostatic) in dimenson $d$, then $(G,c,p,r)$ can never be infinitesimally rigid for any $d$-dimensional configuration $p$.

\medskip

\begin{figure}[htp]
\begin{center}
\begin{tikzpicture}[very thick,scale=1.3]
\tikzstyle{every node}=[circle, draw=black, fill=white, inner sep=0pt, minimum width=4pt];
\node (p1) at (0,0) {};
\node (p2) at (1.7,0) {};
\node (p3) at (1.7,1) {};
\node (p4) at (0,1) {};
\draw[dashed](p1)--(p2);
\draw(p3)--(p2);
\draw[dashed](p4)--(p3);
\draw(p4)--(p1);
\draw(p3)--(p1);
\node [rectangle,draw=white, fill=white] (b) at (0.85,-0.6) {(a)};
\end{tikzpicture}
\hspace{0.5cm}
\begin{tikzpicture}[very thick,scale=1.3,pile/.style={thick, ->, >=stealth'}]
\tikzstyle{every node}=[circle, draw=black, fill=white, inner sep=0pt, minimum width=4pt];
\node (p1) at (0,0) {};
\node (p2) at (1.7,0) {};
\node (p3) at (1.7,1) {};
\node (p4) at (0,1) {};
\draw[dashed](p1)--(p2);
\draw(p3)--(p2);
\draw[dashed](p4)--(p3);
\draw(p4)--(p1);
\draw(p3)--(p1);
\draw[gray,pile](p4)--(-0.5,1);
\draw[gray,pile](p2)--(2.2,0);
\node [rectangle,draw=white, fill=white] (b) at (0.85,-0.6) {(b)};
\end{tikzpicture}
\hspace{0.5cm}
\begin{tikzpicture}[very thick,scale=1.3]
\tikzstyle{every node}=[circle, draw=black, fill=white, inner sep=0pt, minimum width=4pt];
\node (p1) at (0,0) {};
\node[draw=black!20!white] (p2) at (1.7,0) {};
\node (p3) at (1.7,1) {};
\node[draw=black!20!white] (p4) at (0,1) {};
\node (p5) at (-0.65,0.77) {};
\node (p6) at (2.35,0.23) {};
\draw[dashed,black!20!white](p1)--(p2);
\draw[black!20!white](p3)--(p2);
\draw[dashed,black!20!white](p4)--(p3);
\draw[black!20!white](p4)--(p1);
\draw(p3)--(p1);
\draw[dashed](p5)--(p3);
\draw[dashed](p6)--(p1);
\draw(p5)--(p1);
\draw(p3)--(p6);
\draw[thick,->,black!80!white](-0.1,1) arc (90:120:1cm);
\draw[thick,->,black!80!white,xshift=1.7cm,yshift=-1cm](0.1,1) arc (270:300:1cm);
\node [rectangle,draw=white, fill=white] (c) at (0.85,-0.6) {(c)};
\end{tikzpicture}
\hspace{0.5cm}
\begin{tikzpicture}[very thick,scale=1.3]
\tikzstyle{every node}=[circle, draw=black, fill=white, inner sep=0pt, minimum width=4pt];
\node (p1) at (0,0) {};
\node (p2) at (1.7,0) {};
\node (p3) at (1.7,1) {};
\node (p4) at (0,1) {};
\draw[dashed](p1)--(p2);
\draw(p3)--(p2);
\draw[dashed](p4)--(p3);
\draw(p4)--(p1);
\draw(p3)--(p1);
\draw[dashed](p2)--(p4);
\node [rectangle,draw=white, fill=white] (c) at (0.85,-0.6) {(d)};
\end{tikzpicture}
\end{center}
\vspace{-0.3cm}
\caption{A coordinated framework with $k=1$ in the plane,
where the edges in $E_1$ are shown dashed (a). This framework
has a non-trivial infinitesimal motion shown in (b) which
extends to a non-trivial finite motion, as indicated in (c).
Adding another bar to the framework in (a) yields an
infinitesimally rigid framework, as shown in (d).}
\label{fig:motionexamp}
\end{figure}

We define the space of \defn{infinitesimal motions}
$M^+(p)$ of $(G,c,p,r)$ to be the space of solutions 
to \eqref{eq:inf-rigidity-matrix-form}.  By rearranging, 
we see that $M^+(p)$ is the kernel of the 
$m\times (dn + k)$ matrix $R^+(p) := (R(p) , \ones(c))$,
which we call the \defn{coordinated rigidity matrix}.

\begin{example}
The  framework $(K_4,c,p,r)$ with $k=1$ shown in Figure \ref{fig:equiv} has the following coordinated rigidity matrix:
\begin{equation*}R^+(p)=
    \begin{bmatrix}
    p(1)-p(2)   & p(2)-p(1) & \mathbf{0}         & \mathbf{0}         & 0   \\
    p(1)-p(3)   & \mathbf{0}         & p(3)-p(1) & \mathbf{0}         & 0  \\
    p(1)-p(4)   & \mathbf{0}         &   \mathbf{0}       & p(4)-p(1) & 0  \\
    \mathbf{0}           & p(2)-p(3) & p(3)-p(2) & \mathbf{0}         & 0  \\
   \mathbf{0}           & p(2)-p(4) & \mathbf{0}        & p(4)-p(2) & 1 \\
    \mathbf{0}           & \mathbf{0}         & p(3)-p(4) & p(4)-p(3) & 1  \\
    \end{bmatrix}
\end{equation*}
where each $p(i)$ is considered a $2$-dimensional row vector. The rank of this matrix is $6$ and hence $(K_4,c,p,r)$ is infinitesimally rigid.

Note that for any $p$, the row rank of this matrix is clearly at most 6. For a $2$-dimensional framework on
four vertices with $k=2$ to be infinitesimally rigid, we would require a column rank of 7. 
\end{example}

\begin{theorem}\label{thm:inf-finite}
Let $(G,c,p,r)$ be a $d$-dimensional coordinated framework.  If 
$(G,c,p,r)$ is infinitesimally rigid, then it is 
rigid.  If $(G,c,p,r)$ is generic and infinitesimally
flexible, then it is flexible.
\end{theorem}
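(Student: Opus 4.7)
The plan is to adapt the Asimow--Roth proof of Theorem \ref{thm:rigidity-is-generic} to the augmented configuration space $\RR^{nd}\times\RR^k$. The main new ingredients are (a) the correct edge-measurement map whose Jacobian realises the coordinated rigidity matrix, and (b) the observation that, up to the diagonal scaling of Lemma \ref{lem: inf motions make sense}, the constant-rank theorem applies in the same way as classically.

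First I set up the geometric model. Define the \emph{measurement map}
\[
\phi:\RR^{nd}\times\RR^k\to\RR^m, \qquad \phi_e(p,r) = \|p(j)-p(i)\| + r(c(e)),
\]
with the convention $r(0) = 0$ on uncoloured edges. Then two placements are equivalent precisely when they have the same image under $\phi$. Computing partial derivatives and rescaling rows by $\|p(j)-p(i)\|$, one sees that $D\phi(p,r)$ and the coordinated rigidity matrix $R^+(p)$ have the same kernel and rank, provided no edge has zero length. The congruence class of $(p,r)$ is the orbit $O(p,r) = \{(Tp,r): T\in \Euc(d)\}\subseteq \phi^{-1}(\phi(p,r))$; note that $s=r$ is forced on this orbit by \eqref{eq:ej-constraint}. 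Assuming the standard non-degeneracy that $p$ affinely spans $\EE^d$, $O(p,r)$ is a smooth $\binom{d+1}{2}$-dimensional submanifold whose tangent space at $(p,r)$ is exactly the space of trivial infinitesimal motions $\{(p',\vec{0})\}$.

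For the first assertion, suppose $(G,c,p,r)$ is infinitesimally rigid, so $\dim\ker R^+(p) = \binom{d+1}{2}$. Since the trivial motions always lie in this kernel, the rank of $R^+$ is at its maximum possible value at $(p,r)$, and by lower semi-continuity of rank this maximum is attained on a neighbourhood. Hence $\phi$ has locally constant rank near $(p,r)$, and the constant-rank theorem gives that $\phi^{-1}(\phi(p,r))$ is locally a smooth submanifold of dimension $\binom{d+1}{2}$. As $O(p,r)$ is a smooth submanifold of the same dimension contained in it, the two agree on a neighbourhood of $(p,r)$, establishing rigidity.

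For the second assertion, if $p$ is generic then the rank of $R^+(p)$ equals the generic rank of this polynomial matrix, so it is automatically locally constant. Infinitesimal flexibility gives $\dim\ker R^+(p) > \binom{d+1}{2}$, and a second application of the constant-rank theorem produces a smooth submanifold $\phi^{-1}(\phi(p,r))$ of dimension strictly greater than $\dim O(p,r)$. Any smooth curve in this submanifold transverse to $O(p,r)$ yields a non-trivial finite motion. The only place where one must be careful is to rule out a motion in which $r$ drifts while $(p_t,r_t)$ remains congruent to $(p,r)$: but by the definition of congruence this is impossible, since congruence of placements forces $s=r$, so any placement off the orbit $O(p,r)$ is automatically non-congruent. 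I expect the main obstacle is simply this bookkeeping around the orbit dimension and the non-degeneracy assumption on $p$; these are handled by standard arguments (essentially identical to those in \cite{AR78}), aided by the fact that the columns of $\ones(c)$ are linearly independent because each $E_\ell$ is non-empty, which ensures $r'=\vec{0}$ whenever $p'$ is a trivial infinitesimal motion.
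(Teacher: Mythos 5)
Your overall route is exactly the one the paper has in mind: its ``proof'' of Theorem~\ref{thm:inf-finite} is the single remark that the result ``can be established using differential-geometric arguments along the lines of \cite{AR78}'', and your measurement map, the orbit $O(p,r)$, and the two applications of the constant-rank theorem are a faithful execution of that plan. Your bookkeeping around congruence (that equivalence plus congruent placements forces $s=r$, so the congruence class inside a fibre of $\phi$ is precisely the orbit) is also correct and is the one genuinely new point relative to the classical argument.

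There is, however, one step that does not hold up as written: the claim that, after rescaling the row of edge $e=\{i,j\}$ by $\|p(j)-p(i)\|$, the Jacobian $D\phi(p,r)$ and $R^+(p)$ ``have the same kernel and rank''. Rescaling that row multiplies \emph{every} entry in it by $\|p(j)-p(i)\|$, including the $1$ sitting in the column of the colour class $c(e)$; since different edges of a single class $E_\ell$ generally have different lengths, the rescaled matrix is $(R(p),\,D\ones(c))$ with $D$ the diagonal matrix of edge lengths, not $R^+(p)=(R(p),\ones(c))$. The discrepancy is a per-edge, not per-class, factor, so it cannot be absorbed into a linear change of variable in $r'$, and the two matrices can have different ranks at special configurations: $\rank(R(p),\ones(c))-\rank R(p)$ is the dimension of the projection of the span of the characteristic vectors $\chi_{E_\ell}$ onto the stress space $S(p)=\orth{\mathrm{col}(R(p))}$, whereas for $(R(p),D\ones(c))$ it is the projection of the $D\chi_{E_\ell}$. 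For instance, for $K_4$ in the plane with stress $\omega$ and a single colour class $\{e_1,e_2\}$, a placement with $\omega(e_1)=-\omega(e_2)$ but $\|e_1\|\neq\|e_2\|$ makes the two ranks disagree. (This is the same difficulty that is glossed over in the paper's own Lemma~\ref{lem: inf motions make sense}, whose proof defines $r''$ by a formula depending on the choice of edge within a class.) The consequence for your argument is concentrated in the first assertion, which is stated for arbitrary $p$: maximality of $\rank R^+(p)$ does not yet give maximality of $\rank D\phi(p,r)$, so the constant-rank theorem has not been licensed. You should either run the entire argument with $(R(p),D\ones(c))$ --- i.e.\ take \eqref{eq:inf0-constraint0}--\eqref{eq:infj-constraint0} as the operative definition of infinitesimal rigidity --- or prove that the two ranks coincide whenever $R^+(p)$ has full rank. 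For the second assertion the issue is milder, since for generic $p$ both matrices attain their common generic rank, but that equality also deserves a sentence rather than being folded into ``the same kernel and rank''.
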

Theorem \ref{thm:inf-finite} can be established using 
differential-geometric 
arguments along the lines of \cite{AR78}.  We omit the 
(standard) details, which can be found in
\cite{HS19}.
The combinatorial perspective on this is:
\begin{corollary}\label{cor:generic}
For every $k$-coordinated graph $(G,c)$ and dimension $d$, either every $d$-dimensional generic
coordinated framework $(G,c,p,r)$ is rigid or every 
 $d$-dimensional generic coordinated framework $(G,c,p,r)$ is flexible.
\end{corollary}
In light of Corollary \ref{cor:generic}, we define $(G,c)$ to be \defn{generically rigid} in dimension $d$
if there is a generic $d$-dimensional framework $(G,c,p,r)$ that is rigid, and otherwise
\defn{generically flexible}. 

We say that a coordinated framework $(G,c,p,r)$ is \defn{independent} 
if the coordinated rigidity matrix $R^+(p)$ has independent rows. Moreover,
$(G,c,p,r)$ is \defn{isostatic} if it is infinitesimally rigid and independent. Similarly, $(G,c)$ is \defn{generically independent (isostatic)} in dimension $d$ if there is a generic $d$-dimensional framework $(G,c,p,r)$ that is independent (isostatic).

\section{Generic coordinated rigidity}\label{sec:bridges}
In this section we develop the generic theory for coordinated  rigidity and show that in all dimensions $d$, generic coordinated rigidity can be characterized in terms of redundant rigidity in the standard $d$-dimensional rigidity matroid.

\subsection{Main theorem}
Given a family  of sets, $\cE$, a \defn{transversal} of $\cE$ is a set containing exactly one element from each member of $\cE$.

The following is our main theorem.
\begin{theorem}\label{thm:all-k}
For $d\ge 1$ and $k\ge 1$, $(G,c)$ is generically 
rigid in dimension $d$ if and only if $G$ is generically rigid in dimension $d$ and 
some transversal $\{e_1, \ldots, e_k\}$ of the 
coordination classes $E_1, \ldots, E_k$ is redundant in $M_d(G)$.
\end{theorem}

Theorem \ref{thm:inf-finite} implies that generic coordinated rigidity is a matroidal property, and we may use Theorem \ref{thm:all-k}  to describe the coordinated rigidity matroid.
Before we give the proof of Theorem \ref{thm:all-k}, we reformulate this result in terms of matroid unions. We need the following definitions.

Let $E$ be a finite set and let $\cE = \{E_1, \ldots, E_k\}$ 
be a collection of disjoint subsets of $E$.  The 
\defn{transversal matroid} $T_E(\cE)$ on $E$ induced by the $E_i$
has as its bases the sets
\[
    \{ \{e_1, \ldots, e_k\} : \text{$e_i \in E_i$ for all $i\in [k]$}\}
\] See, e.g., \cite{bry,O11}.

If $M_1$ and $M_2$ are two matroids on a common ground set $E$, then the 
\defn{matroid union} $M_1\vee M_2$ is defined as the matroid on $E$ with the property that a subset $F$ is independent in $M_1\vee M_2$ if and only if it has the form $F=F_1\cup F_2$, where $F_i$ is independent in $M_i$ for $i=1,2$.

With these definitions, we have the following matroidal formulation of
Theorem \ref{thm:all-k}.
\begin{theorem}\label{thm:all-k matroid}
Let $(G,c)$ be a $k$-coordinated graph, and let 
$T_E(\cE)$ be the transversal matroid on $E$ induced by the coordination classes $\cE=\{E_1,\ldots, E_k\}$.  Then the 
$d$-dimensional $k$-coordinated rigidity matroid of $(G,c)$ is the union 
$M_d(G)\vee T_E(\cE)$ of the $d$-dimensional rigidity matroid of $G$
and the matroid $T_E(\cE)$.
\end{theorem}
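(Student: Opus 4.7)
The plan is to prove $M^{\mathrm{coord}}(G,c) = M_d(G)\vee N$ by showing that the two matroids share the same family of independent sets. By Lemma \ref{lem: coord inf = no colspan} applied to the sub-framework on any $F\subseteq E$,
\begin{equation*}
  \rank R^+(p)|_F \;=\; r_d(F) + \rank T(F),
\end{equation*}
where $T(F)\colon S(p)|_F \to \RR^{r_N(F)}$ sends a self-stress to the vector of its colour-class sums. On the matroid-union side, Edmonds' rank formula gives $r_{M_d(G)\vee N}(F) = \min_{S\subseteq F}\bigl[|F\setminus S|+r_d(S)+r_N(S)\bigr]$, and equivalently $F\in\mathcal{I}(M_d(G)\vee N)$ iff $F$ admits a partition $F=F_1\sqcup F_2$ with $F_1\in\mathcal{I}(M_d(G))$ and $F_2\in\mathcal{I}(N)$, iff $|S|\le r_d(S)+r_N(S)$ for every $S\subseteq F$.

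First I would establish the inclusion $\mathcal{I}(M^{\mathrm{coord}})\subseteq\mathcal{I}(M_d(G)\vee N)$ by the contrapositive. If some $S\subseteq F$ violates sub-cardinality, $|S|>r_d(S)+r_N(S)$, then $\dim S(p)|_S = |S|-r_d(S) > r_N(S) \ge \rank T(S)$, so $\ker T(S)$ is non-trivial. Extending any non-zero $\omega\in\ker T(S)$ by zero on $F\setminus S$ produces a non-trivial coordinated self-stress on $F$, exhibiting a linear dependence among the rows of $R^+(p)|_F$.

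For the reverse inclusion $\mathcal{I}(M_d(G)\vee N)\subseteq\mathcal{I}(M^{\mathrm{coord}})$, I assume $F\in\mathcal{I}(M_d(G)\vee N)$ and suppose for contradiction that $\omega$ is a non-zero coordinated self-stress on $F$ for generic $p$. Set $U=\supp(\omega)$; because $\omega|_U\in S(p)|_U$ has full support, $U$ is $M_d$-bridgeless. Heredity of $\mathcal{I}(M_d(G)\vee N)$ under subsets gives $|U|\le r_d(U)+r_N(U)$, i.e.\ $\dim S(p)|_U \le r_N(U)$. The generic-rank claim below then yields $\rank T(U) = \min(\dim S(p)|_U, r_N(U)) = \dim S(p)|_U$, so $T(U)$ is injective, contradicting $\omega|_U\in\ker T(U)\setminus\{0\}$.

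The main obstacle is the generic-rank claim invoked above: for a bridgeless $U\in\mathcal{I}(M_d(G)\vee N)$ and generic $p$, the map $T(U)$ attains its expected rank $\min(\dim S(p)|_U, r_N(U))$. Equivalently, no non-trivial $\QQ$-linear combination $\sum_i c_i\chi_{U\cap E_i}\in\RR^U$ lies in the column space of $R(p)|_U$. This is a polynomial non-vanishing assertion in the coordinates of $p$, and I would establish it by combining the algebraic independence of $p$'s coordinates over $\QQ$ with an inductive combinatorial argument on subsets of $U$ — using the hereditary sub-cardinality guaranteed by $F\in\mathcal{I}(M_d(G)\vee N)$ — in the same spirit as the proof of Theorem \ref{thm:all-k}.
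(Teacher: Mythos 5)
Your route is genuinely different from the paper's, and it is worth saying what each buys. The paper deduces the identity from Theorem \ref{thm:all-k} by matching \emph{bases}: a basis of the coordinated rigidity matroid is a minimally generically rigid $(G,c)$, which by Theorem \ref{thm:all-k} is exactly the disjoint union of a basis of $M_d(G)$ with a redundant rainbow $k$-tuple, i.e.\ a basis of $N$. You instead compare \emph{independent sets} directly via the rank identity $\rank R^+(p)|_F = r_d(F)+\rank T(F)$; that identity is correct (the left kernel of $R^+(p)|_F$ consists precisely of the self-stresses on $F$ with vanishing colour-class sums, i.e.\ $\ker T(F)$), and your first inclusion $\mathcal{I}(M^{\mathrm{coord}})\subseteq\mathcal{I}(M_d(G)\vee N)$ is complete --- it is the same ``specialization of the matroid-union representation'' bound that the paper uses for necessity (cf.\ Remark \ref{rem: union}).

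The reverse inclusion, however, has a genuine gap. Your reduction to the injectivity of $T(U)$ on $S(p)|_U$ for $U=\supp(\omega)$ is sound, but the ``generic-rank claim'' you then invoke is not a peripheral technicality: it \emph{is} the hard direction of the theorem, precisely the ``step specific to the problem'' that Remark \ref{rem: union} warns cannot be read off from the Matroid Union Theorem, and you leave it unproven. Worse, the strategy you sketch --- ``in the same spirit as the proof of Theorem \ref{thm:all-k}'' --- does not transfer as stated: the sufficiency argument there relies essentially on $G\setminus E'$ being a \emph{spanning generically rigid} graph, so that its rows span the row space of $R(p)$ and the stresses $\omega_1,\ldots,\omega_k$ can be manufactured from edge resolutions. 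An arbitrary bridgeless $U\in\mathcal{I}(M_d(G)\vee N)$ need not lie in any rigid subgraph of $G$, so there is no such supply of resolutions. To close the gap you would either have to prove the non-vanishing assertion directly (an induction you have not supplied), or first extend $F$ to a basis of the union matroid on a completed graph with a suitably extended colouring, apply Theorem \ref{thm:all-k} to see that the completed coloured graph is isostatic, and then restrict --- which is, in essence, the paper's proof. As written, your argument establishes one inclusion and reduces the other to an unproven genericity statement.
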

\begin{proof}
We show that the two matroids have the same bases.
The generically isostatic $k$-coordinated spanning subgraphs of $(G,c)$ form the bases of the $d$-dimensional coordinated 
rigidity matroid of $(G,c)$.  Let $(H,c')$ be such a graph, with $H = (V,F)$ and $c'=c|_F$.  By Theorem \ref{thm:all-k}, $H$ must have exactly 
$dn - \binom{d+1}{2} + k$ edges, including 
a transversal $F'=\{e_1, \ldots, e_k\}$ of $\cE$ that is redundant in 
$M_d(H)$.  Thus $F\setminus F'$ is independent in $M_d(G)$,
and hence a basis of $M_d(G)$, and $\{e_1, \ldots, e_k\}$ is a basis of $T_E(\cE)$.
Conversely, Theorem \ref{thm:all-k} implies that any $k$-coordinated graph arising in this way is a basis of the $d$-dimensional $k$-coordinated rigidity matroid of $(G,c)$.

Hence, the $d$-dimensional coordinated rigidity matroid has
the claimed matroid union structure.
\end{proof}

To prove Theorem \ref{thm:all-k} we need the following specialized fact from matroid theory. This can be found in \cite[Proposition 7.6.14]{bry}, for example.

\begin{lemma} \label{lem:matunion}
Let $M_1$ and $M_2$ be two linearly representable  matroids (over $\RR$)  on the same ground set. Then the matroid union $M_1\vee M_2$ is also linearly
representable, and a representation may be obtained
by a matrix of the form $(A,DB)$ where the rows of 
$A$ represent $M_1$, the rows of $B$ represent $M_2$, and $D$ is a diagonal matrix of algebraically independent transcendentals.  
\end{lemma}

We also need  some specialized lemmas regarding equilibrium stresses.
Let $(G,p)$ be a framework with at least $d$ vertices.  We say that 
the \defn{tied down} rigidity matrix $R^{i_1, \ldots, i_d}(p)$ is the matrix obtained from $R(p)$ by throwing away $d - j + 1$ columns associated with $p(i_j)$.
White and Whiteley \cite{WW83} showed that $\omega$ is an equilibrium stress of $(G,p)$ if and only if $\omega R^{i_1, \ldots, i_d}(p) = 0$ for any choice 
of the $i_j$.

\begin{lemma}[\cite{WW83}]\label{lem: stress coeff special}
Suppose that $(G,p)$ is a generic framework so that $(G\setminus \{e\}, p)$ is isostatic (and hence $e$ is  redundant).  Then 
$(G,p)$ has a unique equilibrium stress $\omega$  where $\omega(e) = 1$ and for all other edges $f$, we have
\[
    \omega(f) = (\det R_{e\to \times}^{i_1, \ldots, i_d}(p))^{-1}\det(R^{i_1, \ldots, i_d}_{f\to e}(p))
\]
where the $i_j$ are any tie-down vertices and $R^{i_1, \ldots, i_d}_{f\to e}$ is obtained by removing the row corresponding to $e$ and then 
replacing the row corresponding to $f$ with it and $R^{i_1, \ldots, i_d}_{e\to \times}$ by simply dropping the row corresponding to $e$.
\end{lemma}
\begin{proof}
White and Whiteley \cite{WW83} show that $\omega$ is independent of this kind of standard tie down, and then 
observe that, in this case, Cramer's rule gives the claimed form, since the row corresponding to $e$ is 
in the span of the others, which are a row basis.
\end{proof}

Next we show that if the length of the edge $e$ gets very small, then the stress coefficient corresponding to $e$ becomes very large compared to all other coefficients.
\begin{lemma}\label{lem: crush}
Suppose that $(G,p)$ is a generic framework so that $(G\setminus \{e\}, p)$ is isostatic (and hence $e$ is redundant).  Let $e = \{i_1,i_2\}$ and 
define $p^t$ to be like $p$ 
except $p^t(i_2) = tp(i_2) + (1 - t)p(i_1)$.  
Let $\omega^t$ be the equilibrium stress of $(G,p^t)$ with $\omega^t(e) = 1$.  Then for all other edges $f$, we have for generic $p_t$,
\[
    |\omega^t(f)| \to 0
\]
as $t\to 0$.
\end{lemma}
\begin{proof}
Since $(G\setminus \{e\}, p)$ is isostatic, $(G,p)$ has a unique equilibrium stress, up to a scalar. Moreover, the stress coefficient of $e$ is non-zero.
Note that for any polynomially defined property, such as the rank of the rigidity matrix or the stress coefficients being non-zero, all but finitely many of the frameworks $(G,p^t)$ have this property.
So there exists a $t_0$ so that for all $t<t_0$, all frameworks $(G,p^t)$ are isostatic and hence have a unique equilibrium stress (up to a scalar) that is non-zero on $e$. In the following we assume that $t<t_0$.

We fix the stress coefficient $\omega^t(e)$ to be $1$ for all $t$, so that each $(G,p^t)$ has a unique equilibrium stress. 
Consider the stress coefficients of all edges $f$ other than $e$. 
A formula for $\omega^t(f)$ is given in Lemma~\ref{lem: stress coeff special}.
Since the lengths and directions of the edges of $G \setminus \{e\}$ only change a bounded amount as $t\to 0$, and the length of the edge $e$ approaches zero as $t\to 0$, the determinant in the numerator in the formula in Lemma~\ref{lem: stress coeff special} approaches zero as $t\to 0$. 

Let us now consider the determinant  in the denominator in the formula in Lemma~\ref{lem: stress coeff special}.
If the limit framework $(G\setminus\{e\},p^0)$ is isostatic, then the denominator is bounded away from zero for all $t$ and the result follows. If $(G\setminus\{e\},p^0)$ is not isostatic, then both the numerator and the denominator may converge to zero, but we will show that in this case the numerator approaches zero at a faster rate than the denominator, which then again gives the result.

Without loss of generality, we may assume that $p^t(i_1)$ lies at the origin and $p^t(i_2)=(t,0,\ldots, 0)$, since the frameworks $(G,p^t)$ are then still `quasi-generic', that is, congruent to generic frameworks. We may further assume that all the coordinates of $p^t(i_1)$ and all except the first coordinate of $p^t(i_2)$ are tied down (so the associated columns in the tied down rigidity matrices in Lemma~\ref{lem: stress coeff special} are missing) and the first coordinates of all vertices adjacent to $i_2$ are not tied down.  

Suppose first that 
$f=\{i_2,i_3\}$ and let $x_3$ be the first coordinate of $p^t(i_3)$ (which is independent of $t$). By genericity, $x_3\neq 0$.
Let us first consider the matrix $R^{i_1, \ldots, i_d}_{f\to e}(p^t)$ in the numerator.
We may assume that the first row of this matrix corresponds to $e$ and hence is the vector $(t,0,\ldots, 0)$. Let the matrix obtained from $R^{i_1, \ldots, i_d}_{f\to e}(p^t)$ by removing the first row be denoted by $A$, and let $C(t)$ be the cofactor of the $(1,1)$-entry of $R^{i_1, \ldots, i_d}_{f\to e}(p^t)$. Then $\det(R^{i_1, \ldots, i_d}_{f\to e}(p^t))=tC(t)$. Let us now consider the matrix  $R^{i_1, \ldots, i_d}_{e\to \times}(p^t)$ in the denominator. Since a reordering of the rows and columns does not change the degrees of the monomials in the determinants, we may assume that the first row corresponds to the edge $f$ (and hence the $(1,1)$-entry is $(t-x_3)$) and that the matrix obtained from $R^{i_1, \ldots, i_d}_{e\to \times}(p^t)$ by removing the first row is again $A$. Then  $\det(R^{i_1, \ldots, i_d}_{e\to \times}(p^t))=(t-x_3)C(t)+D(t)$ for some polynomial $D(t)$.
Since this polynomial contains the expression $x_3C(t)$, where $x_3$ is a non-zero constant, it follows that the lowest degree of the non-zero terms in the denominator is strictly smaller than the degree of any of the non-zero terms in the numerator. Thus, $|\omega^t(f)|\to 0$ for every edge $f\neq e$ that is incident with $i_2$.

Suppose next that there is an edge $f$ of $G$ that is not incident with $i_2$ and has the property that $|\omega^t(f)|$ is bounded away from zero as $t\to 0$. Then, by continuity, the limit framework $(G\setminus\{e\},p^0)$ has an equilibrium stress $\omega^0$  that is non-zero on $f$ and -- by the argument above -- zero on all edges that are incident with $i_2$. But this says that the framework obtained from $(G,p^0)$ by removing the vertex $i_2$ and its incident edges has a non-zero equilibrium stress. This is a contradiction because the vertices of this framework are in quasi-generic position (since they remained fixed as $t\to 0$) and its underlying graph $G\setminus\{i_2\}$ is a subgraph of the isostatic graph $G\setminus \{e\}$. 

So, as claimed, the magnitude of all stress coefficients of $\omega^t$ other than the one for $e$ approach zero as $t\to 0$. \end{proof}

We are now ready to prove Theorem \ref{thm:all-k}.

\begin{proof}[Proof of Theorem \ref{thm:all-k}]
Let $(G,c,p,r)$ be generic. Recall that for analysing infinitesimal rigidity, we may assume that $r=0$. We first prove necessity, so we suppose that $(G,c,p,0)$ is infinitesimally rigid. By Lemma \ref{lem:matunion}, the matrix $(R(p),D\ones(c))$ is a linear representation for the matroid union $M_d(G)\vee T_E(\cE)$ of the $d$-dimensional
generic rigidity matroid of $G$ and the transversal matroid on $E$ induced by the coordination classes $\cE=\{E_1,\ldots, E_k\}$, where 
$D$ is an $m\times m$ diagonal matrix of algebraically independent transcendentals. Since the coordinated rigidity matrix $R^+(p)$ of $(G,c,p,0)$
has the form $(R(p),\ones(c))$ it follows that any independent set in the $d$-dimensional $k$-coordinated rigidity matroid of $(G,c)$ must also be independent in $M_d(G)\vee T_E(\cE)$. So the rank of 
$R^+(p)$ is upper-bounded by 
\begin{equation}\label{eq:max}
    \max_{E'\subset E}
    \left\{
        \rank_{M_d(G)}\left(E\setminus E'\right)
        + 
        \rank_{T_E(\cE)}(E')
    \right\} \le
    dn - \binom{d+1}{2} + k.
\end{equation}
Since $(G,c,p,0)$ is infinitesimally rigid, we have 
equality throughout. Let $E'$ be a subset of $E$ that gives the maximum $dn - \binom{d+1}{2} + k$ in \eqref{eq:max}, and let $G'$ be the graph induced by $E\setminus E'$.  Since 
\[
    \rank_{M_d(G)}\left(E\setminus E'\right) = dn - \binom{d+1}{2}
\]
the framework $(G',p)$ is infinitesimally rigid.  This makes 
the edges in $E'$ redundant.  Since  
$\rank_{T_E(\cE)}(E')=k$, $E'$ contains a transversal of $\cE$.

For sufficiency, we suppose that there is a spanning subgraph $H$ of $G$ so that $H$ is generically isostatic and $F = E\setminus E(H)$
contains a transversal of the coordination classes $\cE$.  Call the transversal $T = \{e_1,\ldots, e_k\}$, and let $G'$ be the graph obtained from $H$ by adding the edges of $T$ to $H$.  We show that $(G',c)$ is generically isostatic by constructing a coordinated framework $(G',c,p, 0)$ that is isostatic.  The intuitive idea is that we want to find a 
generic position $p$ so that, if we consider the equilibrium stresses $\omega_1, \ldots, \omega_k$ of $(G',p)$ from the statement of Lemma~\ref{lem: stress redundant 2}, the entries not in the left-hand $k\times k$ block of the $k\times m$ matrix $W$ that has 
the $\omega_i$ as rows are very small.  In this case $X = W\ones(c)$ is 
diagonally dominant,
and so full rank.  The idea for doing this latter step is to make the lengths of the 
edges $e_1, \ldots, e_k$, one after the other, all very short.  By Lemma \ref{lem: crush}, this makes the stress coefficient corresponding to $\omega_i$ on $e_i$ relatively large for each $i$.  The transversal structure is important, since 
the stresses $\omega_i$ don't need the other transversal edges at all.  
This leaves only the problem that, as we make  a transversal edge $e_i$ short, the stress coefficients corresponding to the equilibrium stresses $\omega_1,\ldots, \omega_{i-1}$ for some  non-transversal (but coordinated) edges may increase again. We get around that by using Lemma \ref{lem: crush} iteratively, so that $e_1$ is much shorter than $e_2$ and so on.  What follows gives the details.

We start with a generic framework $(H,p)$. Set $H_1 = H \cup \{e_1\}$.  By genericity, $e_1$ is 
redundant in $(H_1,p)$. Let $e_1=\{i_1,i_2\}$. By Lemma \ref{lem: crush}, for every $\eps_1>0$, we may shrink the length $|e_1|$ of the edge $e_1$ by moving the vertex $i_2$ towards $i_1$ (while fixing the positions of all other vertices),  until we obtain a generic configuration $p^t$ with $|e_1|=t$ so that the unique equilibrium stress $\omega_1^t$ of $(H_1,p^t)$ has $\omega_1^t(e_1) = 1$ and coefficients on all other edges of magnitude at most $\eps_1$. For each $i=1,\ldots, k$, let $\kappa_i$ be the size of the $i$th coordination class of the graph $(H,c|_{E(H)})$. Then, since we may assume that $\eps_1$ is sufficiently small, the first entry of $\omega_1^t \ones(c)$ has a magnitude of at least $1-\kappa_1\eps_1$, and the other entries of $\omega_1^t \ones(c)$ have a respective magnitude of at most $\kappa_i\eps_1$ for $i=2,\ldots, k$. 
So as $t$ approaches zero, the magnitude of the first entry of $\omega_1^t \ones(c)$ stays bounded away from $0$, whereas the magnitudes of the other entries approach $0$.

Now we repeat the process on the graph $H_2 = H\cup \{e_2\}$
starting from $(H,p^t)$. Let $i_3\neq i_1,i_2$ be a vertex incident with $e_2$.
Then, again by Lemma \ref{lem: crush}, for every $\eps_2>0$, we may shrink the length $|e_2|$ of the edge $e_2$ by moving the vertex $i_3$ (while fixing the positions of all other vertices) until we obtain a generic configuration $p^{t,s}$ with $|e_2|=s$ so that the unique equilibrium stress $\omega_2^{t,s}$ of $(H_2,p^{t,s})$ has $\omega_2^{t,s}(e_2) = 1$ and coefficients on all other edges of magnitude at most $\eps_2$. In fact, by continuity of the stress coefficients,  if $|e_1|=t$ was chosen sufficiently small, then there exists a generic configuration $p^{t,s}$ with $|e_2|=s$ so that for all $t'\leq t$, the generic configuration $p^{t',s}$ with $|e_1|=t'$ has the property that the unique equilibrium stress $\omega_2^{t',s}$ of $(H_2,p^{t',s})$ has $\omega_2^{t',s}(e_2) = 1$ and coefficients on all other edges of magnitude at most $\eps_2$. 
(Note that since $e_1$ is not an edge of $H_2$, a small perturbation of $|e_1|=t$ gives rise to a bounded perturbation of the stress coefficients of $\omega_2^{t,s}$.)

Because of the transversal structure, as $s$ approaches zero, the magnitude of the second entry of $\omega_2^{t,s}\ones(c)$ stays bounded away from $0$, whereas the magnitudes of the other entries approach $0$. Let $\eps_2$ be chosen sufficiently small so that the magnitude of the second entry of $\omega_2^{t,s} \ones(c)$ is  larger than the sum of the magnitudes of the other entries.
So the vector $\omega_2^{t,s} \ones(c)$ has the desired structure, and, as mentioned above, we may assume that $t$ is sufficiently small so that the vector $\omega_2^{t',s} \ones(c)$ also has the desired structure for all $t'\leq t$.

As we carry out this second step, the equilibrium stress $\omega_1^t$ changes continuously to $\omega_1^{t,s}$, and the magnitude of the stress coefficients on the edges of $H$ may be increasing again (while the stress coefficient on $e_1$ is held fixed at $1$). So to keep the desired structure for the vector $\omega_1^{t,s} \ones(c)$ we may need to choose a $t'$ that is smaller than $t$.

At the end of the second step, when the configuration $p^{t,s}$ is reached, the magnitude of each of the stress coefficients of  $\omega_1^t$ on the edges of $H$ will have changed by at most some positive factor $\Delta_{t,s}$. This factor changes as $t$ is replaced with $t'\leq t$, but 
by continuity of the stress coefficients, there also exists a constant $\Delta_s>0$  such that for all $t'\leq t$ we have $|\omega_1^{t',s}(f)|\leq \Delta_s |\omega_1^{t'}(f)|$ for all edges $f$ of $H$. Since we may assume that $\eps_1$ is sufficiently small, the first entry of the vector  $\omega_1^{t,s} \ones(c)$ has a magnitude of at least $1-\kappa_1\Delta_s\eps_1$ at the end of the second step, 
 and the other entries have a respective magnitude of at most $\kappa_i\Delta_s\eps_1$, $i=2,\ldots, k$.

So if the vector $\omega_1^{t,s} \ones(c)$ no longer has the desired structure, then we may choose a suitably smaller $\eps_1$  so that for a corresponding  suitably small $t'< t$  the vector $\omega_1^{t',s} \ones(c)$ does have the desired structure, i.e., the magnitude of the first entry of the vector  $\omega_1^{t',s} \ones(c)$  is still  larger than the  sum of the magnitudes of the other entries at the end of the second step. So we can guarantee that both the vector $\omega_1^{t',s} \ones(c)$ and the vector $\omega_2^{t',s} \ones(c)$ have  the desired structure for the configuration $p^{t',s}$ at the end of the second step.

Iterating this process, we see that we may choose constants $\eps_1 \ll \eps_2 \ll \cdots \ll \eps_k$ so that eventually we arrive at a configuration $p^k$ which is generic and has $X$ diagonally 
dominant.

We conclude that, generically, the matrix 
\[
    [R(p) \quad \ones(c)]
\]
has empty co-kernel, and hence rank $dn - \binom{d+1}{2} + k$. This completes the proof of sufficiency.
\end{proof}

\subsection{Examples}
Figure \ref{fig:mainthm} shows an example of a rigid coordinated 
framework with two coordination classes in dimension $2$.  The edges
$e_1$ and $e_2$ form a redundant transversal of the coordination classes $E_1$ and $E_2$ and certify 
generic rigidity by Theorem \ref{thm:all-k}.  The edges $f_1$ and $f_2$ are another transversal of $E_1$ and $E_2$, but they are not redundant. 
\begin{figure}[htp]
\begin{center}
\begin{tikzpicture}[very thick,scale=1]
\node at (2.05,3) {};
\node at (1.85,-1) {};
\node at (0.65,1.05) {$e_1$};
\node at (2.75,1.7) {$e_2$};
\node at (1.75,1.15) {$f_1$};
\node at (2.8,0.9) {$f_2$};
\tikzstyle{every node}=[circle, draw=black, fill=white, inner sep=0pt, minimum width=4pt];
\node (p1) at (0.7,2.3) {};
\node (p2) at (1.9,0) {};
\node (p3) at (1,0.9) {};
\node (p4) at (0,0.8) {};
\node (p5) at (2,2) {};
\node (p6) at (2.5,1.2) {};
\node (p7) at (3.6,1) {};
\draw[dashed](p1)--(p2) -- (p5);
\draw(p3)--(p2) -- (p4);
\draw[dashed](p4)--(p3) ; 
\draw(p4)--(p1) ;
\draw(p3)--(p1);
\draw[dotted] (p1) -- (p5) -- (p7);
\draw[dotted] (p7) -- (p6);
\draw (p5) -- (p6) -- (p2) -- (p7);
\node [rectangle,draw=white, fill=white] (b) at (1.25,-0.6) {(a)};
\end{tikzpicture}
\hspace{0.5cm}
\begin{tikzpicture}[very thick,scale=1,pile/.style={thick, ->, >=stealth'}]
\node at (2.05,3) {};
\node at (1.85,-1) {};
\tikzstyle{every node}=[circle, draw=black, fill=white, inner sep=0pt, minimum width=4pt];
\node (p1) at (0.7,2.3) {};
\node (p2) at (1.9,0) {};
\node (p3) at (1,0.9) {};
\node (p4) at (0,0.8) {};
\node (p5) at (2,1.8) {};
\node (p6) at (2.5,1.2) {};
\node (p7) at (3.6,1) {};
\draw (p1)--(p2) -- (p5);
\draw (p3)--(p2) -- (p4);
\draw (p4)--(p1) ;
\draw (p3)--(p1);
\draw (p1) -- (p5) ;
\draw (p2) -- (p6);
\draw (p5) -- (p6) -- (p7) -- (p2) ;
\node [rectangle,draw=white, fill=white] (b) at (1.25,-0.6) {(b)};
\end{tikzpicture}
\hspace{0.5cm}
\begin{tikzpicture}[very thick,scale=1,pile/.style={thick, ->, >=stealth'}]
\node at (2.05,3) {};
\node at (1.85,-1) {};
\tikzstyle{every node}=[circle, draw=black, fill=white, inner sep=0pt, minimum width=4pt];
\node (p1) at (0.7,2.3) {};
\node (p2) at (1.9,0) {};
\node (p3) at (1,0.9) {};
\node (p4) at (0,0.8) {};
\node (p5) at (2,1.8) {};
\node (p6) at (2.5,1.2) {};
\node (p7) at (3.6,1) {};
\draw (p1)--(p2) ;
\draw (p3)--(p2) -- (p4) --  (p3);
\draw (p4)--(p1) ;
\draw (p3)--(p1);
\draw (p1) -- (p5) -- (p7);
\draw (p2) -- (p6);
\draw (p5) -- (p6)  (p7) -- (p2) ;
\node [rectangle,draw=white, fill=white] (b) at (1.25,-0.6) {(c)};
\end{tikzpicture}
\end{center}
\vspace{-0.4cm}
\caption{A coordinated framework in the plane with $k=2$, where edges in $E_1$ and $E_2$ are 
indicated by dashed and dotted lines, respectively (a).
Removing the redundant edge pair $e_1,e_2$ results in a graph that is
rigid as an uncoordinated framework (b), satisfying the conditions of 
Theorem \ref{thm:all-k} for the coordinated framework in (a) to be rigid.
Removing the pair $f_1, f_2$, results in 
the flexible uncoordinated framework (c). 
}
\label{fig:mainthm}
\end{figure}

Figure~\ref{fig:circuits} shows two examples of 2-coordinated  graphs
that yield flexible frameworks for generic configurations in dimension 2.  In 
both cases, there is no transversal of the coordination classes which is redundant.

\begin{figure}[htp]
\begin{center}
\begin{tikzpicture}[very thick,scale=1]
\node at (0,-2) {};
\tikzstyle{every node}=[circle, draw=black, fill=white, inner sep=0pt, minimum width=4pt];
\node (p1) at (-0.5,0) {};
\node (p2) at (-2,1) {};
\node (p3) at (-2,-1) {};
\node (p4) at (-1.3,0) {};
\node (p5) at (0.5,0) {};
\node (p6) at (2,1) {};
\node (p7) at (2,-1) {};
\node (p8) at (1.3,0) {};
\draw[dotted](p1)--(p5) (p2) -- (p6) (p3) -- (p7);
\draw[dashed](p3)--(p4)  (p7) -- (p8);
\draw (p1) -- (p2) -- (p3) -- (p1) -- (p4) -- (p2) ;
\draw (p5) -- (p6) -- (p7) -- (p5) -- (p8) -- (p6) ;
\node [rectangle,draw=white, fill=white] (b) at (0,-1.75) {(a)};
\end{tikzpicture}
\hspace{1.5cm}
\begin{tikzpicture}[very thick,scale=1]
\node at (0,-2) {};
\tikzstyle{every node}=[circle, draw=black, fill=white, inner sep=0pt, minimum width=4pt];
\node (p1) at (0,1.2) {};
\node (p2) at (-2,0) {};
\node (p3) at (-0.75,-0.2) {};
\node (p4) at (0,-1.25) {};
\node (p5) at (0.75,-0.2) {};
\node (p6) at (2,0) {};
\node (p7) at (0,0.2) {};
\draw[dotted] (p1) -- (p7);
\draw[dashed] (p3) -- (p7);
\draw[dashed] (p7) -- (p5);
\draw (p6) -- (p1) (p6) -- (p4) (p6) -- (p5);
\draw (p5) -- (p1) (p5) -- (p4);
\draw (p4) -- (p2) (p4) -- (p3);
\draw (p3) -- (p1) (p3) -- (p2);
\draw (p2) -- (p1);
\node [rectangle,draw=white, fill=white] (b) at (0,-1.75) {(b)};
\end{tikzpicture}
\end{center}
\vspace{-0.3cm}
\caption{A pair of generically flexible 2-coordinated graphs, where edges in $E_1$ and $E_2$ are 
indicated by dashed and dotted lines, respectively. 
In the graph shown in (a) none of the edges in $E_2$ is redundant in $M_2(G)$.
In the graph shown in (b), each of the edges in $E_1$ and $E_2$ is 
redundant in $M_2(G)$, but no pair of them is.
}
\label{fig:circuits}
\end{figure}

Finally, Figure~\ref{fig:spheregraph} shows an example of a flexible coordinated framework $(G,c,p,r)$ with three coordination classes in dimension 3 whose underlying framework $(G,p)$ is rigid.  There is only one edge in each coordination class (the edges joining $p_3$ with $p_6, p_7$ and $p_8$) and if we remove these three edges, we clearly have a flexible uncoordinated framework, as the line through $p_4$ and $p_5$ acts like a hinge. Thus, the three edges in $E_1\cup E_2\cup E_3$ are not redundant.

We note that $p_3$ is a cone vertex in this framework, as it is joined to all other vertices. Thus, as described in the introduction, we may think of this framework as a spherical framework, with $p_3$ fixed at the centre of the sphere. In this model, the  uncoordinated edges incident to $p_3$ are assumed to have unit length, and the other three edges incident to $p_3$ are allowed to have arbitrary length. This example was given in \cite[Figure 1]{NSTW15} to show that the sparsity counts established there are not always sufficient for the generic rigidity of such $3$-dimensional spherical frameworks. However, as described above, it follows immediately from Theorem~\ref{thm:all-k} that $(G,c)$ is generically flexible, and hence we may deduce that the (non-generic) spherical framework must at least be infinitesimally flexible.

\begin{figure}[htp]
\begin{center}
\begin{tikzpicture}[very thick,scale=1]
\tikzstyle{every node}=[circle, fill=white, inner sep=0pt, minimum width=5pt];
\linespread{1.0}
\node [circle, shade, ball color=black!40!white, inner sep=0pt, minimum width=7pt](p1) at (-2.4,0) {};
\node [circle, shade, ball color=black!40!white, inner sep=0pt, minimum width=7pt](p2) at (-1.2,-0.2) {};
\node [circle, shade, ball color=black!40!white, inner sep=0pt, minimum width=7pt](p3) at (-0.2,0.2) {};
\node [circle, shade, ball color=black!40!white, inner sep=0pt, minimum width=7pt](p4) at (-1,1.7) {};
\node [circle, shade, ball color=black!40!white, inner sep=0pt, minimum width=7pt](p5) at (-0.8,-1.7) {};
\node [circle, shade, ball color=black!40!white, inner sep=0pt, minimum width=7pt](p6) at (1.8,0.2) {};
\node [circle, shade, ball color=black!40!white, inner sep=0pt, minimum width=7pt](p7) at (3,0.6) {};
\node [circle, shade, ball color=black!40!white, inner sep=0pt, minimum width=7pt](p8) at (3,-0.9) {};

\draw (p1) -- (p2)node[rectangle, draw=white, anchor=south west, below left=3pt] {$p_2$};
\draw (p2) -- (p3) node[rectangle, draw=white,anchor=south, below right=3pt] {$p_3$};
\draw [thick](p3) -- (p4) node[rectangle, draw=white,anchor=south, above=5pt] {$p_4$};
\draw [thick](p4) -- (p1) node[rectangle, draw=white,anchor=south, left=5pt] {$p_1$};
\draw(p4)--(p2);
\draw[thick](p3)--(p1);
\draw[thick] (p5)node[rectangle, draw=white,anchor=south, below=5pt] {$p_5$} -- (p1);
\draw (p5) -- (p2);
\draw[thick] (p5) -- (p3);
\draw[dotted] (p6)node[rectangle, draw=white,anchor=south, above=7pt] {$p_6$} -- (p3);
\draw (p6) -- (p4);
\draw (p6) -- (p5);
\draw (p6) -- (p7)node[rectangle, draw=white,anchor=east, right=5pt] {$p_7$};
\draw (p6) -- (p8)node[rectangle, draw=white,anchor=east, right=5pt] {$p_8$};
\draw (p7) -- (p8);
\draw[dashed] (p7) -- (p3);
\draw (p7) -- (p4);
\draw (p7) -- (p5);
\draw[dash dot] (p8) -- (p3);
\draw (p8) -- (p4);
\draw (p8) -- (p5);
\end{tikzpicture}
\end{center}
\vspace{-0.3cm}
\caption{A coordinated framework in $3$-space with $k=3$. The edges in $E_1,E_2$ and $E_3$ are indicated by dashed, dotted and dashed-dotted lines, respectively. The framework is flexible by Theorem~\ref{thm:all-k}, since the three edges in $E_1\cup E_2\cup E_3$ are not redundant.}
\label{fig:spheregraph}
\end{figure}

\section{Closing remarks and further work} \label{sec:closing}
In this paper we set up a rigidity model for a type of coordinated edge motions 
and characterized the coordinated rigid graphs in terms of redundant rigidity in the standard rigidity matroid.
To finish up, we discuss some outstanding issues and open questions.

\subsection{Algorithms}
Note that Theorem \ref{thm:all-k matroid} implies that in dimensions $d=1,2$, there is a deterministic, 
polynomial time algorithm to check whether a $k$-coordinated graph $(G,c)$ is generically
rigid in dimension $d$.  Since we have deterministic independence oracles for the matroids
$M_{d,n}$ when $d=1,2$ \cite{BJ03b, LS08} and $T_E(\cE)$, Edmonds's algorithm \cite{E65} 
yields a deterministic polynomial time algorithm for $M_{d,n}\vee T_E(\cE)$ for these $d$ and any $k$.
Combining Edmonds's algorithm \cite{E65} and the pebble game \cite{LS08} in a naive way 
results in a running time bound that is $\omega(n^3)$.  This means that 
the deterministic 
algorithm is slower than a randomized 
algorithm based on checking the rank of the rigidity matrix over a finite field (see, 
e.g., \cite{GHT}).

For $d\ge 3$, no deterministic polynomial time algorithm for checking independence in the
generic rigidity matroid $M_{d,n}$ is known.  Since we need a randomized algorithm 
as a subroutine, using Edmonds's algorithm does not reduce the complexity.

A more refined complexity analysis in dimensions $d=1,2$ or algorithms for finding 
rigid components in coordinated frameworks would be interesting.

\subsection{Recursive constructions}

Recursive graph construction moves (such as the Henneberg construction moves) which preserve  generic rigidity play an important role in combinatorial rigidity theory (see, e.g., \cite{NR14}). These moves can
be used to prove key theorems such as the theorem of Laman and Pollaczek-Geiringer, to analyze graphs for generic rigidity, and to generate classes of generically rigid or isostatic graphs in all dimensions.

In the case when $d=2$, a ``Henneberg-type'' characterization of
generically isostatic coordinated graphs with one coordination class is given in \cite{HS19}. Moreover, it is shown in 
\cite{HS19} that the special case of Theorem \ref{thm:all-k} when $d=2$ and $k=1$
can be proved directly using these modified Henneberg moves.

A more complex Henneberg-type characterization of generic coordinated rigidity when $d=2$ and $k=2$ is also established in \cite{HS19}.
However, in contrast to the $d=2$ and $k=1$ case, the proof relies on Theorem \ref{thm:all-k}. We refer the reader to \cite{HS19} for further details and conjectures.

\subsection{Coordination classes maintaining sums or ratios of edge lengths}
It is natural to try to extend our work to coordinated frameworks in which the edges
in each class have to coordinate their edge lengths changes in a different fashion, say by maintaining
the sum of the edge lengths or the pairwise ratios of the edge lengths. 
The rigidity analysis of such coordinated frameworks seems more complex than the one considered in this paper.

For example, in dimension $2$ it is well known that the standard rigidity matroid is equivalent to the parallel drawing matroid (see \cite{Wh96}, for example), and it is easy to see that if the lengths of two edges of $(G,p)$ are allowed to change so that the ratio of their lengths is maintained, then in the corresponding parallel drawing matroid these edges are allowed to change their direction, but their angle is maintained. The rigidity analysis of frameworks with angle constraints, however, is known to be very difficult in general.

\subsection{Global coordinated rigidity}
This paper deals exclusively with local rigidity of coordinated frameworks.
A stronger condition than local rigidity is \defn{global rigidity}, which 
would, in the coordinated setting, require that \textit{any} $(G,c,q,s)$
that is equivalent to a coordinated framework $(G,c,p,r)$ be congruent to it (as opposed to just the $(G,c,q,s)$ with $(q,s)$ lying in  a small neighborhood of $(p,r)$).  See Figure \ref{fig:equiv} for an example of a coordinated framework in the plane which is locally rigid, but not globally rigid. Whether global rigidity is a generic property for coordinated frameworks is open.

For bar-joint frameworks, that global rigidity is a generic property is 
a deep result of Gortler, Healy and Thurston \cite{GHT}, who built on 
work of Connelly \cite{C82,C05} and Hendrickson \cite{H92}.  It would be 
interesting to know how much of the theory underlying \cite{GHT}
carries over to the coordinated setting.

\section{Acknowledgements}
This work started at the 2016 ICMS rigidity workshop and a 
follow-up meeting at Lancaster University.  
We thank Walter Whiteley for his encouragement to work on these types of problems, as well as Tibor Jord\'an, Simon Guest and Tony Nixon for helpful discussions about matroid unions, statics, and rigidity circuits, respectively. Helpful suggestions from the anonymous referees improved the paper.

\bibliographystyle{abbrvnat}
\bibliography{coord}

\begin{thebibliography}{24}
\providecommand{\natexlab}[1]{#1}
\providecommand{\url}[1]{\texttt{#1}}
\expandafter\ifx\csname urlstyle\endcsname\relax
  \providecommand{\doi}[1]{doi: #1}\else
  \providecommand{\doi}{doi: \begingroup \urlstyle{rm}\Url}\fi

\bibitem[Asimow and Roth(1978)]{AR78}
L.~Asimow and B.~Roth.
\newblock The rigidity of graphs.
\newblock \emph{Trans. Amer. Math. Soc.}, 245:\penalty0 279--289, 1978.
\newblock \doi{10.2307/1998867}.

\bibitem[Berg and Jord\'an(2003)]{BJ03b}
A.~R. Berg and T.~Jord\'an.
\newblock Algorithms for graph rigidity and scene analysis.
\newblock In \emph{Algorithms---{ESA} 2003}, volume 2832 of \emph{Lecture Notes
  in Comput. Sci.}, pages 78--89. Springer, Berlin, 2003.
\newblock \doi{10.1007/978-3-540-39658-1_10}.

\bibitem[Brylawski(1986)]{bry}
T.~Brylawski.
\newblock Constructions.
\newblock in Theory of Matroids, N. White, editor, Cambridge University Press,
  1986.

\bibitem[Connelly(1982)]{C82}
R.~Connelly.
\newblock Rigidity and energy.
\newblock \emph{Invent. Math.}, 66\penalty0 (1):\penalty0 11--33, 1982.
\newblock \doi{10.1007/BF01404753}.

\bibitem[Connelly(2005)]{C05}
R.~Connelly.
\newblock Generic global rigidity.
\newblock \emph{Discrete Comput. Geom.}, 33\penalty0 (4):\penalty0 549--563,
  2005.
\newblock \doi{10.1007/s00454-004-1124-4}.

\bibitem[Edmonds(1965)]{E65}
J.~Edmonds.
\newblock Minimum partition of a matroid into independent subsets.
\newblock \emph{J. Res. Nat. Bur. Standards Sect. B}, 69B:\penalty0 67--72,
  1965.
\newblock ISSN 0160-1741.

\bibitem[Ellenbroek et~al.(2015)Ellenbroek, Hagh, Kumar, Thorpe, and van
  Hecke]{EHKTvH15}
W.~G. Ellenbroek, V.~F. Hagh, A.~Kumar, M.~F. Thorpe, and M.~van Hecke.
\newblock Rigidity loss in disordered systems: Three scenarios.
\newblock \emph{Phys. Rev. Lett.}, 114:\penalty0 135501, Apr 2015.
\newblock \doi{10.1103/PhysRevLett.114.135501}.

\bibitem[Gortler et~al.(2010)Gortler, Healy, and Thurston]{GHT}
S.~J. Gortler, A.~D. Healy, and D.~P. Thurston.
\newblock Characterizing generic global rigidity.
\newblock \emph{Amer. J. Math.}, 132\penalty0 (4):\penalty0 897--939, 2010.
\newblock ISSN 0002-9327.
\newblock \doi{10.1353/ajm.0.0132}.

\bibitem[Hendrickson(1992)]{H92}
B.~Hendrickson.
\newblock Conditions for unique graph realizations.
\newblock \emph{SIAM J. Comput.}, 21\penalty0 (1):\penalty0 65--84, 1992.
\newblock ISSN 0097-5397.
\newblock \doi{10.1137/0221008}.

\bibitem[Hexner et~al.(2018)Hexner, Liu, and Nagel]{HLN18}
D.~Hexner, A.~J. Liu, and S.~R. Nagel.
\newblock Role of local response in manipulating the elastic properties of
  disordered solids by bond removal.
\newblock \emph{Soft Matter}, 14:\penalty0 312--318, 2018.
\newblock \doi{10.1039/C7SM01727H}.

\bibitem[Laman(1970)]{L70}
G.~Laman.
\newblock On graphs and rigidity of plane skeletal structures.
\newblock \emph{J. Engrg. Math.}, 4:\penalty0 331--340, 1970.
\newblock \doi{10.1007/BF01534980}.

\bibitem[Lee and Streinu(2008)]{LS08}
A.~Lee and I.~Streinu.
\newblock Pebble game algorithms and sparse graphs.
\newblock \emph{Discrete Math.}, 308\penalty0 (8):\penalty0 1425--1437, 2008.
\newblock \doi{10.1016/j.disc.2007.07.104}.

\bibitem[Lov\'{a}sz and Yemini(1982)]{LY82}
L.~Lov\'{a}sz and Y.~Yemini.
\newblock On generic rigidity in the plane.
\newblock \emph{SIAM J. Algebraic Discrete Methods}, 3\penalty0 (1):\penalty0
  91--98, 1982.
\newblock ISSN 0196-5212.
\newblock \doi{10.1137/0603009}.

\bibitem[Milnor(1968)]{milnor}
J.~Milnor.
\newblock \emph{Singular points of complex hypersurfaces}.
\newblock Annals of Mathematics Studies, No. 61. Princeton University Press,
  Princeton, N.J.; University of Tokyo Press, Tokyo, 1968.

\bibitem[Nixon and Ross(2014)]{NR14}
A.~Nixon and E.~Ross.
\newblock One brick at a time: a survey of inductive constructions in rigidity
  theory.
\newblock In \emph{Rigidity and symmetry}, volume~70 of \emph{Fields Inst.
  Commun.}, pages 303--324. Springer, New York, 2014.
\newblock \doi{10.1007/978-1-4939-0781-6_15}.

\bibitem[Nixon et~al.(2018)Nixon, Schulze, Tanigawa, and Whiteley]{NSTW15}
A.~Nixon, B.~Schulze, S.-I. Tanigawa, and W.~Whiteley.
\newblock Rigidity of frameworks on expanding spheres.
\newblock \emph{SIAM J. Discrete Math.}, 32\penalty0 (4):\penalty0 2591--2611,
  2018.
\newblock ISSN 0895-4801.
\newblock \doi{10.1137/17M1116088}.

\bibitem[Oxley(2011)]{O11}
J.~Oxley.
\newblock \emph{Matroid theory}, volume~21 of \emph{Oxford Graduate Texts in
  Mathematics}.
\newblock Oxford University Press, Oxford, second edition, 2011.
\newblock \doi{10.1093/acprof:oso/9780198566946.001.0001}.

\bibitem[Pollaczek-Geiringer(1927)]{hilda}
H.~Pollaczek-Geiringer.
\newblock \"{U}ber die {G}liederung {E}bener {F}achwerke.
\newblock \emph{ZAMM - Journal of Applied Mathematics and Mechanics /
  Zeitschrift für Angewandte Mathematik und Mechanik}, 7\penalty0
  (1):\penalty0 58--72, 1927.
\newblock \doi{10.1002/zamm.19270070107}.

\bibitem[Reid et~al.(2018)Reid, Pashine, Wozniak, Jaeger, Liu, Nagel, and
  de~Pablo]{RPW+18}
D.~R. Reid, N.~Pashine, J.~M. Wozniak, H.~M. Jaeger, A.~J. Liu, S.~R. Nagel,
  and J.~J. de~Pablo.
\newblock Auxetic metamaterials from disordered networks.
\newblock \emph{Proceedings of the National Academy of Sciences}, 115\penalty0
  (7):\penalty0 E1384--E1390, 2018.
\newblock \doi{10.1073/pnas.1717442115}.

\bibitem[Schulze and Whiteley(2017)]{HoDCG}
B.~Schulze and W.~Whiteley.
\newblock Rigidity and {S}cene {A}nalysis.
\newblock in Handbook of Discrete and Computational Geometry, C.D.~Toth,
  J.~O'Rourke, J.E.~Goodman, editors, Third Edition, Chapman \& Hall CRC, 2017.

\bibitem[Serocold(2019)]{HS19}
H.~Serocold.
\newblock Rigidity of frameworks with coordinated constraint relaxations.
\newblock Ph.D. thesis, Lancaster University, 2019.

\bibitem[White and Whiteley(1983)]{WW83}
N.~L. White and W.~Whiteley.
\newblock The algebraic geometry of stresses in frameworks.
\newblock \emph{SIAM J. Algebraic Discrete Methods}, 4\penalty0 (4):\penalty0
  481--511, 1983.
\newblock \doi{10.1137/0604049}.

\bibitem[Whiteley(1983)]{coning}
W.~Whiteley.
\newblock Cones, infinity and {$1$}-story buildings.
\newblock \emph{Structural Topology}, \penalty0 (8):\penalty0 53--70, 1983.
\newblock ISSN 0226-9171.
\newblock With a French translation.

\bibitem[Whiteley(1996)]{Wh96}
W.~Whiteley.
\newblock Some {M}atroids from {Di}screte {A}pplied {G}eometry.
\newblock In \emph{Matroid theory ({S}eattle, {WA}, 1995)}, volume 197 of
  \emph{Contemp. Math.}, pages 171--311. Amer. Math. Soc., Providence, RI,
  1996.
\newblock \doi{10.1090/conm/197/02540}.

\end{thebibliography}

\end{document}